\newcommand{\R}{\mathbb{R}}%
\newcommand{\Z}{\mathbb{Z}}%
\newcommand{\N}{\mathbb{N}}%
\newcommand{\II}{\mathcal{I}}%
\newcommand{\JJ}{\mathcal{J}}%
\newcommand{\KK}{\mathcal{K}}%
\newcommand{\leb}{\mathcal{L}}%
\newcommand{\setsep}{\colon\,}
\newcommand{\eqdef}{\mathrel{\mathop:}=}%
\newcommand{\Lip}{\operatorname{Lip}}%
\newcommand{\lip}{\operatorname{lip}}%
\newcommand{\dist}{\operatorname{dist}}%
\newcommand{\osc}{\operatorname{osc}}%
\newcommand{\spt}{\operatorname{supp}}%
\newcommand{\supp}{\operatorname{supp}}%
\newcommand{\eps}{\varepsilon}%
\newcommand*{\Char}{\text{\usefont{U}{bbold}{m}{n}1}}%
\let\leq\leqslant%
\let\geq\geqslant%
\let\tilde\widetilde%
\let\hat\widehat%
\newcommand{\field}[1]{\mathbb{#1}}
\DeclarePairedDelimiter\abs{\lvert}{\rvert}%
\DeclarePairedDelimiter\norm{\lVert}{\rVert}%
\let\oldabs\abs
\def\abs{\@ifstar{\oldabs}{\oldabs*}}
\let\oldnorm\norm
\def\norm{\@ifstar{\oldnorm}{\oldnorm*}}
\newcommand{\dt}{\ensuremath{\,\mathrm{d}t}}%
\crefname{empty}{}{}
\crefname{equation}{}{}
\Crefname{figure}{Figure}{Figures}
\crefname{page}{page}{pages}
\Crefname{enumi}{}{}
\Crefname{subsection}{Subsection}{Subsections}
\def\theoremname{Theorem}%
\def\propositionname{Proposition}%
\def\lemmaname{Lemma}%
\def\corollaryname{Corollary}%
\def\definitionname{Definition}%
\def\conventionname{Convention}%
\def\axiomname{Axiom}%
\def\remarkname{Remark}%
\def\examplename{Example}%
\def\questionname{Question}%
\def\constructionname{Construction}%
\def\assumptionname{Assumption}%
\def\conjecturename{Conjecture}%
\newtheorem{thm}{\theoremname}[section]
\theoremstyle{plain}
\newtheorem{theorem}[thm]{\theoremname}
\newtheorem{lemma}[thm]{\lemmaname}
\newtheorem{lem}[thm]{Lemma}
\theoremstyle{definition}
\newtheorem{definition}[thm]{\definitionname}
\newtheorem{notation}[thm]{Notation}
\numberwithin{equation}{section}%
\newcommand{\future}[1]{\vv{#1}}
\newcommand{\past}[1]{#1}
\begin{document}

\title[On sets where $\lip f$ is infinite]{On sets where $\lip f$ is infinite\\for monotone continuous functions}%

\author{Martin Rmoutil}
%\address{Department of Mathematics Education, Faculty of Mathematics and Physics, Charles University, Sokolovská 83, 186 75 Praha 8, Czech Republic}
\address{Department of Mathematics Education, Faculty of Mathematics and Physics, Charles University, Sokolovská 83, 186 75 Praha 8, Czech Republic}
\email{rmoutil@karlin.mff.cuni.cz}

\author{Thomas Zürcher}
\address{Instytut Matematyki, Uniwersytet Śląski, Bankowa 14, 40-007 Katowice, Poland}
%\address{Institute of Mathematics, University of Silesia in Katowice, Poland}
\email{thomas.zurcher@us.edu.pl}

%\author{\sc Martin Rmoutil\,%\orcidlink{}
%}%\thanks{Research supported by )}\\
%{\small Department of Mathematics Education, Faculty of Mathematics and Physics,
%Charles University,
%Sokolovská 83,
%186 75 Praha 8, Czech Republic}}\\
%\and
%\author{
%{\sc Thomas Zürcher\,%\orcidlink{0000-0001-9179-9521}
%}%\samethanks\\%\footnotemark{$\dagger$}\\%\footnoteref{Katowice}\\
%{\small Institute of Mathematics, University of Silesia in Katowice, Poland}\\
%{\small \href{mailto:thomas.zurcher@us.edu.pl}{thomas.zurcher@us.edu.pl}}
%}

%\thanks{}%
%\subjclass{}%
%\keywords{}%

\date{\today}%
\dedicatory{As a teacher and coauthor, our beloved colleague Jan Malý helped to shape our mathematical and world view.
He is sorely missed.
With admiration, we dedicate this paper to him.}
%\commby{}%
% ----------------------------------------------------------------

\begin{abstract}
  For a function $f\colon \field{R}\to \field{R}$, we let 
  \begin{equation*}
    \lip f(x)=\liminf_{r\to 0_+}\sup_{y\in [x-r,x+r]} \frac{\abs{f(y)-f(x)}}{r}.
  \end{equation*}
  Given any $F_{\sigma\delta}$ set $A\subseteq \R$ of Lebesgue measure zero, we explain how to obtain a nondecreasing absolutely continuous function $g\colon\R\to [0,1]$ such that  $g'(x)=\infty$ for every $x\in A$, and $\lip g(x)<\infty$ for every $x\notin A$.
\end{abstract}
\maketitle
\section{Introduction}
In~\cite{Rademacher}, H.~Rademacher proved that Lipschitz functions between Euclidean spaces are differentiable almost everywhere, see Satz~I.
As we look at functions on the real line, we mention that the one-dimensional case (actually for the larger class of functions of bounded variation) is due to Lebesgue, see page~128 in~\cite{Lebesgue}, republished in~\cite{Lebesgue2009-ms}.
A strengthening of Rademacher's result is by W.~Stepanov in~\cite{Stepanoff}, but before detailing it, let us introduce some notation.
Although our setting is the real line, the following definition is of a metric flavour, and hence we give it for metric spaces.
\begin{definition}\label{D:lipLip}
  Let $(X,d_X)$ and $(Y,d_Y)$ be metric spaces and $f\colon X\to Y$ be a mapping.
  Then we define
  \begin{align*}
      \lip f(x)\eqdef\liminf_{r\to 0_+}\sup_{y\in B(x,r)} \frac{d_Y(f(y),f(x))}{r},\\
      \Lip f(x)\eqdef\limsup_{r\to 0_+}\sup_{y\in B(x,r)} \frac{d_Y(f(y),f(x))}{r}.      
  \end{align*}
  In case we have $X,Y\subseteq\field{R}$, we sometimes replace $B(x,r)$ in the above formulae by $(x-r,x]$ and $[x,x+r)$.
  We indicate this by using $\Lip(x_-)$ and $\Lip(x_+)$, respectively and the same for~$\lip$.
\end{definition}
Stepanov proved that functions $f\colon \mathbb{R}^m\to \mathbb{R}^n$  are differentiable at almost every point in the set where $\Lip f$ is finite.
We also would like to mention the slick proof of this statement by J.~Malý in~\cite{Maly}.
Z.~Balogh and M.~Csörnyei showed in \cite{BC06} that there are functions~$f$ that fail to be differentiable at almost every point in the set where $\lip f$ is finite, see Theorems 1.3~and~1.4 in their paper highlighting two different issues.
However, they also showed in Theorem~1.2 that Stepanov-type theorems still hold if the integrability of $\lip$ and the size of the set where $\lip$ is infinite satisfy certain carefully balanced restrictions.

While these results shed light on the size of the set where $\lip$ is infinite and its connection to differentiability, in the current paper, we are interested to know more about the structure of this set.
This is in the tradition of studying the structure of the set where the derivative of a function is infinite.
Note that $\lip f(x)=\infty$ whenever $f'(x)=\infty$.
A testament to the inquiry of the sets where the derivative is infinite are for example the two papers (written in German) by V.~Jarník, \cite{Jarnik}, and by Z.~Zahorski, \cite{Zahorski}.
Jarník proved in~Satz~3 that given a $G_\delta$ set $G\subset \field{R}$ of measure zero, there is a nondecreasing continuous function $f\colon \field{R}\to \field{R}$ such that $f'(x)=\infty$ for $x\in G$ and all Dini derivatives are finite for $x\notin G$.
Zahorski found such a function that has an infinite derivative at each point in~$G$ and a finite derivative at each point in the complement of~$G$.

The analysis for $\Lip$ is much easier to do than the one for~$\lip$.
%Theorem 3.35. For every Gδ set E ⊂ R, there exists a continuous function f such that L∞f = l∞f = E.
%Lemma 2.4. Let f : R → R be a continuous function. Then:
%(a) lf is a Gδσ set.
%(b) Lf is an Fσ set.
%(c) {x : lip f(x) = 0} is a Gδ set.
For example, Theorem~3.35 and Lemma~2.4 in~\cite{BHRZ} show that, given a set $A\subseteq \mathbb{R}$, there is a continuous function $f\colon \mathbb{R}\to \mathbb{R}$ satisfying $A=\{x\in\R\setsep \Lip f(x)=\infty\}$ if and only if $A$ is a $G_{\delta}$ set.
%For example, Theorem~3.35 and Lemma~2.4 in~\cite{BHRZ} show that for a set $A\subseteq \mathbb{R}$ there is a continuous function $f\colon \mathbb{R}\to \mathbb{R}$ such that $\Lip f(x)=\infty$ exactly for $x\in A$ if and only if $A$ is a $G_{\delta}$ set.
The just mentioned Lemma~2.4 also states that the set where $\lip$ is infinite for a continuous function is an $F_{\sigma\delta}$ set.
We conjecture that whenever $A$ is an $F_{\sigma\delta}$ set in the real line, there exists a continuous function $f\colon \mathbb{R}\to \mathbb{R}$ such that $A=\{x\in \mathbb{R}\setsep \lip f(x)=\infty\}$.
In~\cite{BHRZ}, such functions are given in case $A$ is an $F_{\sigma}$ set.
In this paper, we replace the assumption that $A$ be an $F_{\sigma}$ set by $A$ being an $F_{\sigma\delta}$ set of measure zero.
Moreover, the fact that the set has vanishing measure enables us to find appropriate functions that are not only continuous but actually absolutely continuous and nondecreasing.

To close the introduction, we look at some further papers connected to our research.
%Let us point out a special case of a $F_{\sigma\delta}$ of measure zero.
One part of B.~Hanson's Theorem~1.3 in~\cite{Bruce2} tells us that if $E\subset \field{R}$ is a $G_\delta$ set with measure zero, then there exists a continuous, monotonic function $f\colon \field{R}\to \field{R}$ such that $E=\{x\in\R\setsep \Lip f(x)=\infty\}$ and $\{x\in\R\setsep \lip f(x)=\infty\}=\emptyset$.
Moreover $f$ may be constructed so that $\lip f(x)=0$ for all $x\in E$.
Theorem~1.2 deals with the case where $f$ is not monotonic.

%Further recent research is in the papers~\cite{BHMV20a,BHMV20b,BHMV21}.
%The goal\todonote{check that the goals are attained.} in these works is to characterize the sets~$E$ such that there is a continuous function $f$ satisfying $\lip f(x)=\chi_E(x)$ or $\Lip f(x)=\chi_E$.

A similar topic as the one in our paper is the study of sets~$E$ such that there is a continuous function $f$ satisfying $\lip f(x)=\chi_E(x)$ (or $\Lip f(x)=\chi_E$).
We refer the interested reader to the papers~\cite{BHMV20a,BHMV20b,BHMV21}, where characterizations of such sets are found.

%\todonote{We should cite papers here}
%Papers to cite: our paper with Bruce and Zoltán; other papers where they are (co-)authors, maybe already here Jarnik and Zahorski.
%Maybe pointing out the importance of lip: Cheeger, Keith, Schioppa, maybe also Gong.
%Maybe also mention work in the direction of Stepanov: Rademacher? Stepanov (our paper with Zoltán Balogh and Kevin Rogovin, papers with Kevin Wildrick), check other papers that cite it, for example Ranjbar-Motlagh) then lip-Stepanov (Zoltán Balogh, Marianna Csörnyei; Bruce; my lip-paper; Bruce's lip-paper)
%Possible problem with the citations: they are in $\R^n$ or metric spaces, and we are in~$\R$.
%We also should point out that are functions are monotone, but we have a set of measure zero.
%Do we actually need this result for the general case?
% ----------------------------------------------------------------
\section{The statement of the main result and an explanation of the strategy of its proof}

\begin{theorem}[main result]\label{T:main}
    Assume that $A\subset \mathbb{R}$ is an $F_{\sigma \delta}$ set of measure zero, then there exists a nondecreasing absolutely continuous function $g\colon \mathbb{R}\to \mathbb{R}$ such that $g'(x)=\lip g(x)=\infty$ for every $x\in A$ and $\lip g(x)<\infty$ for every $x\notin A$.
\end{theorem}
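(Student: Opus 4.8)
The plan is to write $A$ explicitly as an $F_{\sigma\delta}$ set, say $A=\bigcap_{n}A_n$ with each $A_n=\bigcup_k F_{n,k}$ a countable union of closed sets, and arrange (by intersecting partial unions) that $A_1\supseteq A_2\supseteq\cdots$ and that for each fixed $n$ the sets $F_{n,k}$ are increasing in $k$. Since $\leb(A)=0$ we may additionally choose open sets $U_n\supseteq A_n$ with $\leb(U_n)$ as small as we like, and shrink so that $U_1\supseteq U_2\supseteq\cdots$. The target function will be built as a sum $g=\sum_{n} c_n g_n$ of nondecreasing absolutely continuous functions $g_n\colon\R\to[0,1]$, where $g_n$ is essentially (a smoothed, normalized version of) the distribution function of the measure $\Char_{U_n}\,\dx$, i.e.\ $g_n(x)\approx \leb((-\infty,x]\cap U_n)/\leb(U_n)$, and $c_n>0$ is a rapidly decreasing summable sequence. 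The point of normalizing by $\leb(U_n)$ is that on the small set $U_n$ the function $g_n$ must climb by a fixed amount, forcing large difference quotients there; choosing $\leb(U_n)\to 0$ fast enough makes $g_n'(x)=\infty$ at every point of the interior of $U_n$, hence at every point of $A$ for every $n$.

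I would carry out the construction in the following steps. \emph{Step 1: the building blocks.} For each $n$ fix an open set $U_n\supseteq A_n$ of very small measure; replace $\Char_{U_n}$ by a function $h_n$ with $0\le h_n\le 1$, $h_n=1$ on a slightly smaller open set still containing $A_n$, $\spt h_n\subseteq U_n$, chosen so that $g_n(x)\eqdef\int_{-\infty}^x h_n$ is absolutely continuous, nondecreasing, bounded, with $g_n'=h_n$ everywhere and $g_n'=1$ on a neighbourhood of $A_n$. \emph{Step 2: the sum.} Pick $c_n>0$ with $\sum c_n<\infty$ so that the normalized increments still diverge; concretely, one wants $\sum_{n\ge N} c_n/\leb(U_n)\to\infty$ while $\sum c_n<\infty$, which is possible because we are free to shrink $\leb(U_n)$ after choosing $c_n$. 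Set $g=\sum_n c_n g_n$; this is a uniform limit of nondecreasing absolutely continuous functions, hence nondecreasing and continuous, and it is absolutely continuous because $g'=\sum_n c_n h_n\ge 0$ with $\int g'=\sum c_n(g_n(\infty)-g_n(-\infty))<\infty$, so $g$ recovers its integral. \emph{Step 3: $g'=\infty$ on $A$.} If $x\in A$ then $x\in A_n$ for all $n$, so for a tail of indices $h_m=1$ near $x$; for $r$ small, $g(x+r)-g(x)\ge \sum_{m\le M}c_m(g_m(x+r)-g_m(x))$ and each term, for $x$ interior to $\{h_m=1\}$, is $\ge$ (something comparable to) $c_m r$, but this only gives finiteness — the divergence instead comes from the \emph{global} increment of $g_m$ over $U_m$: choosing $r=r_m$ so that $[x-r_m,x+r_m]$ still covers a fixed fraction of the mass of $g_m$ forces a difference quotient $\gtrsim c_m/\leb(U_m)$, and summing the tail gives $g'(x)=+\infty$; a diagonal/Cauchy-type argument over $m$ makes this rigorous, and then automatically $\lip g(x)=\infty$. \emph{Step 4: $\lip g(x)<\infty$ for $x\notin A$.} Here $x\notin A_N$ for some $N$; since $A_N$ is closed-ish... no — $A_N$ is $F_\sigma$, so this is the delicate point (see below).

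The main obstacle, as in all such constructions, is \emph{Step 4}: showing $\lip g(x)<\infty$ off $A$. We only know $x\notin A_N=\bigcup_k F_{N,k}$ for some $N$, but $A_N$ need not be closed, so $x$ need not have a whole neighbourhood missing $A_N$, and $U_N$ (being open and containing $A_N$) may well contain $x$. The resolution must exploit the liminf in $\lip$: it suffices to find, for each $n\ge N$ (or a cofinal set of them), arbitrarily small radii $r$ along which $\sup_{|y-x|\le r}|g_n(y)-g_n(x)|/r$ stays bounded — i.e.\ radii along which the graph of $g_n$ is flat near $x$. Because $x\notin F_{N,k}$ for every $k$ and the $F_{N,k}$ are closed and increasing, one chooses the open sets $U_n$ for $n\ge N$ shrinking toward $A_N$ fast enough (relative to $\dist(x,F_{N,k})$, suitably book-kept) that $x$ sees, at a sequence of scales $r_j\to 0$, intervals $[x-r_j,x+r_j]$ on which all the $h_n$ with $n\ge N$ vanish or are constant; on those intervals only the finitely many $g_1,\dots,g_{N-1}$ contribute, and each of those is genuinely Lipschitz (indeed $|g_n'|\le 1$), giving $\sup_{|y-x|\le r_j}|g(y)-g(x)|/r_j\le \sum_{n<N}c_n=:C<\infty$, hence $\lip g(x)\le C$. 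Making the bookkeeping in this last step work — choosing the $U_n$'s to simultaneously be small enough for Step 3 and flexible enough at every $x\notin A$ for Step 4 — is the heart of the argument and is presumably where the $F_{\sigma\delta}$ (rather than $F_\sigma$) structure and the measure-zero hypothesis are both genuinely used.
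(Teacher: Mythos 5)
Your Step 3 already contains an internal contradiction that stops the proposal before the acknowledged difficulty in Step 4. If you take $0\leq h_n\leq 1$ with $g_n'=h_n$ and sum $g=\sum_n c_n g_n$ with $\sum_n c_n<\infty$, then $g'=\sum_n c_n h_n\leq\sum_n c_n$ almost everywhere, and since $g$ is absolutely continuous this gives $g(y)-g(x)=\int_x^y g'\leq (y-x)\sum_n c_n$ for all $x<y$. So $g$ is globally Lipschitz and $\lip g\equiv\Lip g<\infty$ everywhere; no choice of special radii $r_m$ can produce $g'(x)=\infty$. The only way out is to normalize, e.g.\ $h_n=\Char_{U_n}/\leb(U_n)$ so that $g_n\colon\R\to[0,1]$, and this is presumably what you intend when you write $\sum_{n\geq N}c_n/\leb(U_n)=\infty$; but then the problem shifts entirely to Step 4, because now $g'(x)=\infty$ at \emph{every} point of $\bigcap_n U_n$, a $G_\delta$ set that you control only by the crude requirement $U_n\supseteq A_n$. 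An $F_\sigma$ set $A_n$ that is not closed can be dense in some interval, forcing $\bigcap_n U_n$ to contain many points outside $A$.

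This is precisely the gap you flag at the end, and it is not a bookkeeping issue: the radii $r_j$ at which $[x-r_j,x+r_j]$ misses all $\supp h_n$ for $n\geq N$ would have to exist simultaneously for every $x\notin A$, for a single choice of the sets $U_n$, and there is no reason such a choice exists when $A_N$ is a non-closed $F_\sigma$ set that can accumulate at $x$ from every side and at every scale. The paper's proof has to work much harder exactly here. It first splits $A$ (via the Baire property) into a $G_\delta$ part, handled by Jarník's construction (which is essentially your distribution-function idea, and works \emph{because} a $G_\delta$ set does have shrinking open neighbourhoods), and a \emph{meagre} $F_{\sigma\delta}$ part $A_0$. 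For $A_0$ the argument is not ``one function per level $n$'' but one function $g_k$ per compact set $F_k$ in a level-disjoint Suslin scheme: $A_0=\bigcap_n\bigcup_m F_n^m$ with the $F_n^m$ pairwise disjoint compacta on each level and nested across levels (this uses meagreness, via \cref{L:DisjointFSigma}; your choice of \emph{increasing} $F_{n,k}$ destroys exactly the disjointness needed). Each $g_k$ is built by \cref{L:functiong} with an oscillation estimate, item~\cref{LI:4}, forcing $g_k$ to be almost flat on any interval meeting a ``forbidden'' closed set $E_k$ assembled from the supports of all earlier, tree-unrelated functions; and the proof of $\lip g(x)<\infty$ off $A_0$ is a genuinely combinatorial argument that extracts a sequence of radii $r_p=\dist(x,\future{H}_{j_p})$ from the tree order and uses disjointness and nestedness to show every remaining summand satisfies the oscillation bound on $(x-r_p,x+r_p)$. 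None of that machinery (meagre/$G_\delta$ split, disjoint Suslin scheme, the sets $E_k$ and $\future{H}_k$, the oscillation lemma) appears in your proposal, and the ``relative to $\dist(x,F_{N,k})$, suitably book-kept'' step cannot replace it, because it presupposes a per-point choice of the $U_n$ where only a global one is available.
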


We already mentioned the results by Jarník and Zahorski in the introduction.
Our proof actually makes use of the function that Jarník found.
Moreover, our overall proof strategy shows similarities to the one employed by Jarník.
Neither Jarník nor Zahorski state that the constructed function is absolutely continuous.

%We are aware of two results that are very similar in nature to the result we need.
%The articles containing theses results are both written in German, one is by V.~Jarník and the other one by Z.~Zahorski.
%Jarník proved in Satz~3 that given a $G_\delta$\nobreakdash-set $G\subset \field{R}$ of measure zero, there is a nondecreasing continuous function $f\colon \field{R}\to \field{R}$ such that $f'(x)=\infty$ for $x\in G$ and all Dini derivatives are finite for $x\notin G$.
%Zahorski found such a function that is everywhere differentiable.

%We were hoping to be able to just deduce from Zahorski's statement that the function is locally absolutely continuous.
%However, in certain cases there are functions sharing the same properties as the one by Zahorski that are not absolutely continuous.
%To see this we can add the Cantor function to such a function to obtain a function retaining all the properties but failing to be %(locally) absolutely continuous.
%Assuming that Zahorski's function is actually absolutely continuous and $G$ is the Cantor set, we can just add the Cantor function to Zahorski's function.
%\todonote{I do not like this formulation}

%This suggests to actually prove that Jarník's or Zahorski's function is locally absolutely continuous.
%Here, we just give a small explanation why Jarník's function is locally absolutely continuous.
Let us give a brief description of Jarník's construction with an interwoven argument why the constructed function is absolutely continuous.

He starts with an arbitrary $G_\delta$ set $G$ written as countable intersection of open sets~$O_n$.
In the next step, he replaces the open sets by better suited open sets~$U_n$.
Following this, he defines $f_k(x)=\leb^1((-\infty, x)\cap U_k)$.
Finally he adds up all the functions $f_k$ to obtain a function~$f$ having the claimed properties.
That $f'(x)=\infty$ for $x\in G$ follows since $f_k'(x)=1$ for all such $x$.
Note that $f_k'(x)=0$ for~$x$ outside the closure of~$U_k$.
The sets~$U_k$ are chosen so small that $\sum_{k=1}^\infty \norm{f_k}_{1}<\infty$.
This guarantees that $f$ is absolutely continuous, see~\cref{sum AC}.

That the Dini derivatives are finite outside~$G$ needs the cleverly chosen sets~$U_k$ and some careful computations and estimates.

%While in Jarník's and in Zahorski's result the set where the derivative is infinite is the intersection of open sets, in our case it is the intersection of sets of the more general class of $F_\sigma$ sets.
%The nice point in Jarník's proof is the possibility to set $f_k(x)=\leb^1((-\infty, x)\cap U_k)$, or in other words, $f_k(x)=\int_{-\infty}^{x} \chi_{U_k}\dx$ to obtain a function whose derivative is $1$ in~$U_k$ and~$0$ outside of~$\overline{U_k}$.

Behind the choice of~$f_k$ lies the fact that given a set $A$, setting $f(x)=\int_{0}^{x}\chi_{A}$ guarantees that $f'(x)=1$, whenever $x$ is a density point of~$A$.

%For more general sets $A$, setting $f(x)=\int_{0}^{x}\chi_{A}\dx$ does not guarantee that $f'(x)=1$ for $x\in A$.
%However, if $x$ is a density point of~$A$, then we obtain that $f'(x)=1$.
In Jarník's case, the main reason for the modification of the sets~$O_n$ to the sets~$U_n$ is to make sure that the points outside the intersection of all these open sets are such that the function has finite derivative there; in our case we also need to focus on the points in the intersection.
In some sense, we need to transform these points into density points.

Having reviewed Jarník's proof strategy, it is now time to talk about the ideas behind the results in our paper.
\bigskip

The main part of our main result is covered by \cref{null}, which is almost our main result \cref{T:main}, but contains the additional assumption that the $F_{\sigma\delta}$ set $A$ be meagre. A brief outline and an explanation of the proof follow.

The desired function $g$ will be constructed in the form $\sum_{k=1}^\infty g_k$, where each $g_k$ will be
carefully crafted to have the desired properties.
We record the creation of these functions in \cref{L:functiong} and its proof
allowing for particular choices of the parameters of the lemma, especially the sets $E,F, H$.

First we express the set $A$ in a more convenient form, namely we find closed sets $F_k$ ($k\in\N$) such that $A$ is exactly the set of points belonging to infinitely many $F_k$'s. Moreover, the sets $F_k$ are chosen so that for any indices $k,l\in\N$ with $k<l$, if $F_k\cap F_l\neq \emptyset$, then $F_l\subseteq F_k$. Our method of proof relies heavily on these properties. It is useful to note that such an arrangement of closed sets is essentially a level-disjoint Suslin scheme. The disjointness and nestedness are essential for our method, and they can be achieved thanks to the assumption that $A$ be meagre; without it, we can run into problems:
For example, let $A=(0,1)$ and $A=\bigcap L_n$ with $L_n$ of the type $F_\sigma$ for every $n$, and $L_n\subseteq L_m$ whenever $n>m$. Then for some $n_0$ and any $n\geqslant n_0$ we have $0,1\notin L_n$, and by intersecting with $[0,1]$, we may assume also that $L_n\subseteq [0,1]$, so $L_n=(0,1)$. But then, by a classical result of W.~Sierpi\'{n}ski \cite{Sierpinski}, $L_n$ cannot be expressed as the union of countably many pairwise disjoint closed sets.

%then $A$ cannot be expressed as the union of countably many closed sets by a classical result of W.~Sierpi\'{n}ski \cite{Sierpinski}.

%$L\supseteq A$ an $F_\sigma$ set, $L=\bigcup L_n$ with all sets $L_n$ closed, then at least one of the $L_n$'s would have nonempty interior. But this would make it impossible to split $L_n$'s in such a way that $L$ is expressed as a disjoint union of closed sets. The reason for this is a classical result of W.~Sierpi\'{n}ski \cite{Sierpinski} stating that a non-degenerate interval cannot be expressed as the union of countably many pairwise disjoint nonempty closed sets.

%; it can be achieved as $A$ is assumed to be meagre (note that, by a classical result of W.~Sierpi\'{n}ski \cite{Sierpinski}, a non-degenerate interval cannot be expressed as the union of countably many pairwise disjoint nonempty closed sets). 

To resume our thoughts about the Suslin scheme, we note that in particular, the sets $F_k$ are naturally arranged (by inclusion) in a tree. Any vertex of this tree (i.e.\ any of the sets $F_k$) can be seen as the root of a subtree, which we (for the purposes of this explanatory remark) call a family of (all descendants of) the set $F_k$. Clearly, if we pick two indices $k,l$ such that $F_k\cap F_l=\emptyset$, then the corresponding subtrees (families) are also disjoint, and any member of one subtree is disjoint from any set of the other. The two families, i.e.\ the one started by $F_k$ and the one started by $F_l$, can thus be seen as ``unrelated''. For the purposes of this remark, we shall use the words ``descendant'' and ``ancestor'' in the obvious sense of the tree order, while the words ``previous'' or ``past'', and ``later'' or ``future'' will refer simply to the order of indices; that is, if $j<k$, then $F_j$ is previous to $F_k$ and $F_k$ is future to $F_j$.

Next we use the fact that $A$ is Lebesgue null to find pairwise disjoint measurable sets $M_k$ contained in the complement~$A^c$ of~$A$, each with positive measure in every nonempty open interval in $\R$. These are used in \cref{L:SetH} to obtain the compact sets $H_k$ satisfying $F_k\subseteq H_k\subseteq F_k\cup M_k$; the set $H_k$ is where the function $g_k$ is later allowed to grow (see \cref*{L:functiong}~\cref{LI:1}). Requirement~\cref{LI:2} in~\cref*{L:functiong} on the function makes it clear that the set $F_k$ itself is not sufficient for this purpose and must be enlarged.
%Now, the sets $H_k$ are not ordered by inclusion in the same way as the $F_k$'s because the  $M_k$'s are pairwise disjoint. But we can just keep the original tree structure in mind for the $H_k$'s, i.e.\ we consider $H_l$ a descendant of $H_k$ if $F_l\subseteq F_k$. 

Now we make the important step to define the closed sets $\future{H}_k=\bigcup H_j$ where the union is over all $j$ with $F_j\subseteq F_k$, i.e.\ over all the family of $F_k$. It can be seen from the definition that the ordering by inclusion of the sets $\future{H}_k$ is the same as that of the sets $F_k$ in the sense that if $F_k\subseteq F_l$, then also $\future{H}_k\subseteq \future{H}_l$.
However, $\future{H}_k$ and $\future{H}_j$ need not be disjoint, even if $F_k$ and $F_j$ are, a fact that is a source of some complications. The set $\future{H}_k$ contains $F_k$ (as well as all of its descendants, of course) as its ``core'' and it also covers the whole space in which any of the corresponding descendant functions (i.e.\ all $g_j$ for $j$ such that $F_j\subseteq F_k$) will be allowed to grow. The notation tries to convey that $\future{H}_k$ takes responsibility for the whole future of the family.

We also define the closed sets $E_k=\bigcup \future{H}_j$ where the union is over all $j<k$ (so the union is already clear to be over finitely many sets) with $F_j\cap F_k=\emptyset$, that is, over ``\emph{previous} families unrelated to that of $F_k$''. The union is finite, so only finitely many families are involved; on the other hand, each of them is involved as a whole because we use the sets $\future{H}_j$ (and not just $H_j$) in the definition of $E_k$.  

At this point we are finally ready to apply, for each $k\in\N$, \cref{L:functiong} with $E,F,H$ replaced by $E_k, F_k, H_k$, and obtain the functions $g_k$. We may adopt the ``family terminology'' also for the functions, e.g.\ $g_j$ is a descendant of $g_k$ if $F_j\subseteq F_k$. 

The role of the sets $E_k$ is key and is largely revealed by \cref*{L:functiong}~\cref{LI:4}: loosely speaking, $g_k$ is forced to ``behave nicely'' in the vicinity of $E_k$. But $E_k$ is the set where the previous unrelated functions (i.e.\ with smaller indices from different families), as well as all their descendants, are allowed to grow. This means that $g_k$ is chosen so carefully that it does not ``provide unsolicited support to the achievements of \emph{previous} unrelated functions''; simply put, the growths of all the functions $g_k$ do not add up too much where they should not, and this allows us later to prove that for $g\eqdef \sum_{k=1}^\infty g_k$ we have $\lip g(x)<\infty$ whenever $x\notin A$. 
Another way to put this is as follows: For any two \emph{unrelated} functions, say $g_j$ and $g_k$ with $j<k$, we have $H_j\subseteq E_k$ (even $\future{H}_j\subseteq E_k$) and so $g_k$ must ``behave nicely'' close to where $g_j$ grows. That is, the later of the two unrelated functions is taking responsibility for the control we need. Of course, this is just a very rough general idea. The remainder of the proof is to show precisely that $g$ indeed enjoys the desired properties.

It is easy to show that $g'(x)=\infty$ at each point $x\in A$, as $x$ belongs to $F_k$ for infinitely many indices $k$, and for such $k$ we have $g_k'(x)=1$ by \cref*{L:functiong}~\cref{LI:3}; a notable condition in \cref{LI:3} is that $x\notin E_k$, and it must therefore be shown that this is the case whenever $x\in A\cap F_k$. This is not as trivial as it might seem at a first glance because -- as mentioned above -- the sets $\future{H}_k$ and $\future{H}_j$ are not necessarily disjoint even if $F_k$ and $F_j$ are. But it follows from the construction that these intersections are contained in the sets $M_k$, which are all disjoint from~$A$; since $x\in A$, we indeed get that $x\notin E_k$.

Assume, now, that $x\notin A$. Showing that this implies $\lip g(x)<\infty$ is more involved, and it seems to require the careful preparation above. Since $x\in F_k$ for finitely many $k$, one can easily show the same also for the sets $\future{H}_k$, so let $l$ be the largest index with $x\in \future{H}_l$. Since $\Lip g_k(x)<\infty$ for every $k$, we do not have to care about finitely many summands $g_k$. Hence, we only look at indices $k>l$, in particular, indices $k$ such that $x\notin \future{H}_k$. So let $k>l$. If $x\in E_k$, then any interval containing $x$ meets~$E_k$, and the function $g_k$ satisfies the strong estimates \cref*{L:functiong}~\cref{LI:4}; this takes care of all $g_k$ for $k$ such that $x\in E_k$. 

The core of our argument deals with the set of indices $\JJ\eqdef\{k>l\setsep x\notin E_k\}.$ We set $h=\sum_{j\in\JJ} g_j$ and want to prove that $\lip h(x)<\infty$. So we wish to find a suitable decreasing sequence of radii $(r_p)_{p=1}^\infty$, one that witnesses that the lower limit in the definition of $\lip h(x)$ is finite.
%in particular, this same sequence needs to work for every $g_j$, $j\in\JJ$. 
We define the radii as follows: let $j_1\in\JJ$ be minimal such that $\future{H}_{j_1}$ meets the interval $(x-1,x+1)$, and $r_1=\dist(x,\future{H}_{j_1})$. Next, let $j_2\in\JJ$ be minimal such that $\future{H}_{j_2}$ meets $(x-r_1,x+r_1)$, and set $r_2=\dist(x,\future{H}_{j_2})$. We continue this process; if it stops after finitely many steps, it means that $h$ is constant on an open neighbourhood of~$x$. Similarly, if $R\eqdef\lim_{p\to\infty} r_p>0$ we easily reach the same conclusion: $h$ is constant on $(x-R,x+R)$. 

The main case is when $\lim_{p\to\infty} r_p=0$. To treat it, we fix an arbitrary $j\in\JJ$ and aim to estimate the oscillation of $g_j$ on every interval $I_p\eqdef(x-r_p,x+r_p)$ for $p\in\N$. So we fix a $p\in\N$. If $I_p\cap H_j=\emptyset$, then $g_j$ is constant on $I_p$, so assume $I_p\cap H_j\neq\emptyset$. Then we obtain that $j>j_p$, and it also follows that $g_j$ belongs to an unrelated family, i.e.\ $F_j\cap F_{j_p}=\emptyset$.  Thus, by the definition of $E_j$, we have that $H_{j_p}$ (even $\future{H}_{j_p}$) is contained in $E_j$. (Indeed, $E_j$ ``looks at past unrelated families''; but $j_p$ is the ``past'' as $j>j_p$.) But the content of \cref*{L:functiong}~\cref{LI:4} is to provide an oscillation estimate for $g_j$ on any interval that meets $E_j$, in particular on the interval $\overline{I_p}$ that clearly meets $E_j$, even $\future{H}_{j_p}$. Summing over $j\in\JJ$ we find that also $h$ ``behaves nicely'' on $I_p$, for any $p$. Thus we obtain that $\lip h(x)<\infty$. 

The preceding paragraph describes the central argument of the proof. It reveals the reason for using the sets $\future{H}_k$, especially in the definition of $E_k$, instead of just $H_k$: without this trick we would not be able to prove that $j>j_p$. Therefore we would not necessarily have that the appropriate set (in this case that would be $H_{j_p}$) is contained in $E_j$, and in turn, $g_j$ would not be guaranteed to ``behave nicely'' in $I_p$.

\section{Preliminaries}
Here we list some of the notation and conventions that we use, besides the notions of $\lip$ and $\Lip$ introduced in \cref{D:lipLip}. Throughout the paper, we work with the real line $\R$, functions from $\R$ to $\R$, the Lebesgue measure on $\R$ etc. We follow the convention $0\notin\N$, and we use the term ``countable'' for ``at most countable''. We use the standard notation $(a,b)$ for open intervals, and $[a,b]$ for closed intervals in~$\R$.
When we talk about intervals, then we tacitly assume that they are nontrivial, i.e.\ $a<b$.
Given $x\in\R$ and $r>0$, by $B(x,r)$ we mean the usual open ball (with respect to the usual metric on $\R$), i.e.\ the open interval $(x-r,x+r)$; of course, any bounded open interval is an open ball, a fact we will occasionally use without further explanation. For $x\in \R$ and $A\subseteq \R$, we write $\dist(x,A)\eqdef\inf\{\abs{x-y}\setsep y\in A\}$, the distance of $x$ from~$A$. Given any set $A\subseteq \R$, we denote its complement~$\R\setminus A$ by~$A^c$, its closure by $\overline{A}$ and its boundary~$\overline{A}\cap\overline{A^c}$ by $\partial{A}$; since we only use the notion of boundary for intervals, we could equivalently say that, for an interval $I$, $\partial{I}$ is the set of its (at most two) endpoints. For a Lebesgue measurable (or just ``measurable''), $A\subseteq\R$ we use the symbol $\abs{A}$ to denote its Lebesgue measure, or $\abs{A}=\leb^1(A)$. A set in~$\R$ is \emph{nowhere dense} if its closure has empty interior; a set $A\subseteq\R$ is said to be \emph{meagre} if it can be written as the union of countably many nowhere dense sets.

For any function $f\colon \R\to \R$ and a set $U\subseteq\R$, we denote by $\osc(f,U)$ the \emph{oscillation} of $f$ over~$U$, i.e.\ $\osc(f,U)\eqdef\sup\{\abs{f(x)-f(y)}\setsep x,y\in U\}$. However, we use this notation exclusively for nondecreasing functions $f$ and intervals $U$, so $\osc(f,U)$ is just the increment of~$f$ over~$U$. The support of $f$ is the set $\supp(f)\eqdef\overline{\{x\in\R\setsep f(x)\neq 0\}}$. We denote the right (resp. left) derivative of $f$ at $x$ by $f'_{+}(x)$ (resp. $f'_-(x)$). We write $\norm{f}_1=\int_\R \abs{f}$ for the $L^1$-norm of $f$; the supremum norm is $\norm{f}_\infty = \sup_{x\in\R}\abs{f(x)}$. Given a set $A\subseteq\R$, the symbol $\Char_A$ denotes the characteristic function of $A$.

We shall be using some well-known facts about absolutely continuous functions, some of which are arranged into the following lemma.
% we provide a proof for the sake of completeness.

%\begin{lem}[First version]
%  Suppose $E,F\subseteq \R$ are closed, $A,M\subseteq \R$ are measureable, $|A|=0$ and $M$ meets every interval in a set of positive measure. Then for every $\eps>0$ there are a closed set $H$ and a nondecreasing absolutely continuous function $g\colon \R\to [0,\eps]$ such that
%  \begin{enumerate}[(i)]
%    \item $F\subseteq H\subseteq F\cup\{x\in M\setsep \dist (x,F)<\eps\}$;
%    \item $\spt(g')\subseteq H$;
%    \item $\Lip g(x)<\infty$ for every $x\in\R$;
%    \item $g'(x)=1$ for every $x\in A\cap(F\setminus E)$;
%    \item $\osc(g,I)\leqslant \eps|I|$ whenever $I$ is an interval meeting $E$.
%  \end{enumerate}
%\end{lem}
We do not give the detail of its proof; it can be proved by a combination of Fubini's theorem about interchanging the order of summation and differentiation (see for example Theorem~1.4.1 in~\cite{KK96}) and the monotone convergence theorem.
\begin{lemma}\label{sum AC}
  Let $f_k\colon \field{R}\to \field{R}$ be locally absolutely continuous and monotone increasing such that $f\eqdef \sum_{k=1}^\infty f_k$ exists.
  Then the function~$f$ is locally absolutely continuous.

  If moreover $\sum_{k=1}^{\infty}\norm{f_k'}_{1}<\infty$, then $f$ is absolutely continuous.
  A special case is if $\varphi\in L^1$, and $g\colon \field{R}\to \field{R}$ is defined by $g(x)=\int_{-\infty}^x \varphi(t)\dt$, then
  $g$ is absolutely continuous.
\end{lemma}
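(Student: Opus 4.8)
The plan is to reduce the assertions to the fundamental theorem of calculus for the Lebesgue integral, with Fubini's theorem on term-by-term differentiation of series of monotone functions (Theorem~1.4.1 in~\cite{KK96}) as the single nonroutine ingredient. I would first record that since each $f_k$ is nondecreasing, the partial sums $\sum_{k=1}^n f_k$ are nondecreasing, hence so is $f$; thus $f$ and every $f_k$ are differentiable almost everywhere with nonnegative derivative.

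To obtain local absolute continuity I would fix a compact interval $[a,b]$. By Fubini's differentiation theorem the pointwise convergence of $\sum_k f_k$ on $[a,b]$ gives $f'=\sum_{k=1}^\infty f_k'$ almost everywhere on $[a,b]$. Since each $f_k$ is locally absolutely continuous, $f_k(x)-f_k(a)=\int_a^x f_k'$ for $x\in[a,b]$, and summing in $k$ while using the monotone convergence theorem (valid as $f_k'\geq0$) yields
\begin{equation*}
  f(x)-f(a)=\sum_{k=1}^\infty\bigl(f_k(x)-f_k(a)\bigr)=\sum_{k=1}^\infty\int_a^x f_k'=\int_a^x\sum_{k=1}^\infty f_k'=\int_a^x f'
\end{equation*}
for every $x\in[a,b]$. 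Taking $x=b$ shows $f'\in L^1([a,b])$, and the displayed equality presents $f$ on $[a,b]$ as the indefinite integral of an $L^1$ function; absolute continuity of the Lebesgue integral then makes $f$ absolutely continuous on $[a,b]$, and since $[a,b]$ was arbitrary, $f$ is locally absolutely continuous.

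Under the extra hypothesis $\sum_k\norm{f_k'}_1<\infty$, I would combine $f'=\sum_k f_k'$ a.e.\ on $\R$ with the monotone convergence theorem to get $\int_\R f'=\sum_k\int_\R f_k'=\sum_k\norm{f_k'}_1<\infty$, so $f'\in L^1(\R)$. Applying the identity $f(x)-f(a)=\int_a^x f'$ and letting $a\to-\infty$ (monotone convergence again, since $f$ is nondecreasing and $f'\geq0$) shows that $f$ has a finite limit at $-\infty$ and $f(x)=f(-\infty)+\int_{-\infty}^x f'$; hence $f$ is an indefinite integral of a function in $L^1(\R)$ and is therefore absolutely continuous.

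For the stated special case, $\varphi$ need not be nonnegative, so I would split $\varphi=\varphi^+-\varphi^-$ and correspondingly $g=g^+-g^-$ with $g^\pm(x)=\int_{-\infty}^x\varphi^\pm$; each $g^\pm$ is nondecreasing and locally absolutely continuous with $(g^\pm)'=\varphi^\pm$ a.e.\ and $\norm{(g^\pm)'}_1=\norm{\varphi^\pm}_1<\infty$, so the case just proved (applied to the single summand $g^\pm$) gives that $g^\pm$, and hence $g$, is absolutely continuous. The only genuinely nontrivial step is the appeal to Fubini's differentiation theorem; everything else is bookkeeping with monotone convergence and the absolute continuity of the integral, the one mild subtlety being the passage to the unbounded domain in the second part, which is handled via the monotonicity of $f$ together with $f'\in L^1(\R)$.
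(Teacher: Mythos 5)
Your proof is correct and follows exactly the route the paper indicates (the paper gives no proof, only the hint that Fubini's term-by-term differentiation theorem and the monotone convergence theorem suffice); your argument fills in those details cleanly, including the reduction of the $\varphi\in L^1$ special case to the signed decomposition $\varphi=\varphi^+-\varphi^-$.
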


We shall also need the following simple lemma. We shall be using the term \emph{disjoint $K_\sigma$} (which we abbreviate $DK_\sigma$) for any set that can be expressed as the union of countably many pairwise disjoint compact sets.
\begin{lemma}\label{L:DisjointFSigma}
  Any meagre $F_\sigma$-set in $\R$ is $DK_\sigma$.
\end{lemma}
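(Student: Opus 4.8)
The plan is to start from a meagre $F_\sigma$-set $A=\bigcup_{n=1}^\infty C_n$ with each $C_n$ closed and nowhere dense, and to turn this into a decomposition into \emph{pairwise disjoint} compact pieces. The first reduction is to pass from closed sets to compact sets: since $\R=\bigcup_{m\in\Z}[m,m+1]$, each $C_n$ is the countable union $\bigcup_{m\in\Z}(C_n\cap[m,m+1])$ of nowhere dense compact sets, so after relabelling we may assume $A=\bigcup_{n=1}^\infty K_n$ with each $K_n$ compact and nowhere dense. The second, and main, step is to disjointify. The naive attempt is to set $K_n'=K_n\setminus\bigcup_{j<n}K_j$, but this set is only $F_\sigma$ (a compact set minus finitely many compact sets is a finite union of sets of the form $\text{compact}\cap\text{open}$), not compact, so it does not immediately finish the proof. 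The trick is to disjointify the \emph{differences} themselves, using that each $K_n\setminus\bigcup_{j<n}K_j$, being an open subset of the compact nowhere dense set $K_n$ (open in $K_n$), is itself a nowhere dense $F_\sigma$-set, so we need a way to cut such a relatively-open piece of a nowhere dense compact set into disjoint compact bits.

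Concretely, here is the key sublemma I would isolate: if $K$ is a compact nowhere dense subset of $\R$ and $U\subseteq\R$ is open, then $K\cap U$ is $DK_\sigma$. To see this, note first that $\R\setminus K$ is a dense open set, hence a countable disjoint union of open intervals $\bigcup_i (a_i,b_i)$ (the bounded complementary intervals) together with at most two unbounded rays; call all the endpoints $a_i,b_i$ the ``gap endpoints.'' The set $K$ has the property that between any two of its points there is a gap, so for any $\eps>0$ we can cover $K$ by finitely or countably many closed intervals with endpoints lying in $K^c$ (or at $\pm\infty$) of length $<\eps$, and crucially with \emph{disjoint interiors} meeting $K$ — in fact we can arrange a countable family of pairwise disjoint compact intervals $[c_i,d_i]$ with $c_i,d_i\notin K$, whose union contains $K$, and each of diameter less than a prescribed bound. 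Intersecting with the closed set $K$ and with the fixed open set $U$: writing $U=\bigcup_m (p_m,q_m)$ as a disjoint union of open intervals, the sets $K\cap[c_i,d_i]\cap[p_m+1/\ell,q_m-1/\ell]$ over $i,m,\ell$ are compact, their union is $K\cap U$, and the intervals $[c_i,d_i]$ being disjoint and the closed subintervals $[p_m+1/\ell,q_m-1/\ell]\subseteq(p_m,q_m)$ nested in disjoint open intervals, one can organize these countably many compact sets to be pairwise disjoint — replacing $[c_i,d_i]\cap[p_m+1/\ell,q_m-1/\ell]$ by the difference with the finitely many previously-used larger such closed intervals, which stays compact precisely because its endpoints are not in $K$ (they are gap endpoints or lie in $U^c$), so subtracting an open set around them keeps it closed. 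This is the step I expect to be the main obstacle: making the bookkeeping of the disjointification produce genuinely \emph{compact} (not merely closed-in-$K$, or $F_\sigma$) pieces, and it hinges on the interplay between the gaps of the nowhere dense $K$ and the component intervals of $U$.

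With the sublemma in hand the proof concludes quickly. Set $D_1=K_1$ and, inductively, $D_n=K_n\cap(\R\setminus\bigcup_{j<n}K_j)=K_n\cap U_n$ where $U_n=\bigcup_{j<n}K_j^{\,c}$ is open; each $D_n$ is $DK_\sigma$ by the sublemma applied to the compact nowhere dense set $K_n$ and the open set $U_n$, say $D_n=\bigcup_i L_{n,i}$ with the $L_{n,i}$ pairwise disjoint compact sets. The sets $D_n$ are pairwise disjoint by construction (if $j<n$ then $D_n\subseteq K_j^c$ while $D_j\subseteq K_j$), and $\bigcup_n D_n=\bigcup_n K_n=A$. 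Hence $A=\bigcup_{n,i}L_{n,i}$ is a countable union of pairwise disjoint compact sets, i.e.\ $DK_\sigma$, which is exactly what was to be shown. One should also remark that the statement is an ``if'', not ``iff'': the converse fails, since a line segment is compact hence $F_\sigma$ but visibly not meagre, and indeed by the cited Sierpi\'nski result it is not $DK_\sigma$ either — consistent with meagreness being genuinely needed.
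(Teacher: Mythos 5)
Your overall plan matches the paper's: express the meagre $F_\sigma$ set as a countable union of nowhere dense compacta, disjointify by subtracting the earlier pieces, and reduce to a sublemma stating that a nowhere dense compact set intersected with an open set is $DK_\sigma$; the key mechanism in both is to exploit density of the complement of the nowhere dense set to locate ``cut points'' outside it, which then separate the set into pairwise disjoint compact slices. However, your execution of the sublemma has a genuine gap. When you disjointify the pieces $K\cap[c_i,d_i]\cap[p_m+1/\ell,\,q_m-1/\ell]$, the endpoints $c_i,d_i$ are indeed outside $K$ by construction, but nothing ensures that $p_m+1/\ell$ and $q_m-1/\ell$ avoid $K$ -- and your argument that ``subtracting an open set around them keeps it closed'' relies precisely on all new open endpoints lying outside $K$. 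If, say, $p_m+1/\ell\in K$ and is approached from the left by points of $K$, the disjointified piece $K\cap T_n\cap\bigcap_{k<n}T_k^c$ acquires a missing limit point and fails to be closed. The fix is easy (choose the exhausting endpoints from $(p_m,q_m)\setminus K$, possible since $K^c$ is dense), but as written the step is not justified. The paper sidesteps this entirely with a cleaner version of the sublemma: in a single open interval $I=(a,b)$ it picks a bi-infinite increasing sequence $(x_j)_{j\in\Z}\subseteq I\setminus K$ converging to $a$ and $b$, so that $K\cap I=\bigcup_j\bigl(K\cap[x_j,x_{j+1}]\bigr)$ is immediately a disjoint union of compacta, and then treats a general open set componentwise. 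Adopting that formulation would both close the gap and simplify your bookkeeping. As a side remark, your final observation about the converse is mis-stated: a compact interval $[a,b]$ \emph{is} $DK_\sigma$ under the paper's definition (it is itself a single compact set); it is Sierpi\'nski's theorem that it cannot be written as a disjoint union of two or more nonempty closed sets, which is a different statement, though $[a,b]$ being $DK_\sigma$ and non-meagre still shows the converse of the lemma fails.
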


\begin{proof}
 Let us first make two simple observations, providing a proof only for the second one:
  \begin{itemize}
      \item A countable union of pairwise disjoint $DK_\sigma$-sets is itself $DK_\sigma$.
      \item Let $F\subseteq \R$ be a nowhere dense closed set, and $I=(a,b)\subseteq\R$ ($a,b\in \R\cup\{-\infty,\infty\}$) be an open interval. Then $F\cap I$ is $DK_\sigma$.
  \end{itemize}
  The first observation is obvious. To prove the second one, we use the nowhere denseness of $F$ to find an increasing sequence $(x_n)_{n=-\infty}^\infty$ in $I\setminus F$ such that $\lim_{n\to-\infty}x_n=a$ and $\lim_{n\to\infty}x_n=b$. Then we have the following expression of $F\cap I$, which makes it apparent that $F\cap I$ is $DK_\sigma$: 
  \begin{equation*}
      F\cap I= F\cap (a,b) = \bigcup_{n=-\infty}^\infty \left[x_n,x_{n+1}\right] \cap F.
  \end{equation*}

  Having taken care of the two observations, we take an arbitrary meagre set $B\subseteq \R$ of the type $F_\sigma$. Then $B$ can be written as $\bigcup_{n=1}^\infty F_n$ where all $F_n$ are compact. Moreover, each $F_n$ is also nowhere dense: Indeed, if $F_n$ were not nowhere dense, then it would contain a nontrivial open interval, which would make $B$ nonmeagre by the Baire category theorem. 
  
  We can express $B$ as the following disjoint union
  \begin{equation*}
      B= F_1\cup (F_2\setminus F_1)\cup (F_3\setminus (F_1\cup F_2))\cup\dots = \bigcup_{n=1}^\infty \biggl(F_n\setminus \bigcup_{k=1}^{n-1}F_k\biggr),
  \end{equation*}
  so (by the first observation) it suffices to show, given a natural number $n\geq 2$, that $F_n\setminus \bigcup_{k=1}^{n-1}F_k$ is $DK_\sigma$. To that end, define $\II$ to be the set of all components of $\R\setminus \bigcup_{k=1}^{n-1}F_k$; then the elements of $\II$ are pairwise disjoint open intervals. Now we have
  \begin{equation*}
      F_n\setminus \bigcup_{k=1}^{n-1}F_k = \bigcup_{I\in\II}(F_n\cap I),
  \end{equation*}
  where the union on the right-hand side is clearly disjoint, and each of the sets $F_n\cap I$ is $DK_\sigma$ by the second observation. Hence, the first observation implies $F_n\setminus \bigcup_{k=1}^{n-1}F_k$ to be $DK_\sigma$ as required. The proof is complete.
\end{proof}

\section{Proofs}
In this section, we gradually build towards a proof of \cref{T:main}. Although the longest proof is that of \cref{L:functiong} as we need to prove the function $g$ constructed therein has many particular properties, the main ideas are contained in the proof of \cref{null}, which is essentially the same as the main theorem but contains the extra assumption that $A$ be meagre. Getting rid of meagreness is then a simple task.

\begin{notation}\label{N:FepsFhat}
  Given a closed set $F\subseteq \R$, we denote
  \begin{align*}
    (F)_\eps &= \{x\in \R \setsep \dist(x,F)<\eps\}\qquad\text{and}\\
    \widehat{F} &= F\cup \bigcup_{n=1}^\infty \Bigl\{x\in\R\setsep \dist(x,F)=\frac{1}{n}\Bigr\}.
  \end{align*} 
%  For $c\in\R$, $r>0$ and an open interval $I=B(c,r)\subseteq\R$ we denote $\frac{1}{3}I=B%\bigl(c,\frac{1}{3}r\bigr)$ the (open) middle third of $I$.
%  Given an open bounded interval~$I$, we use $\frac{1}{3}I$ to denote the open middle third interval;
%  as we are in the real line, we can easily formulate it more mathematically as follows.
%  First, we find $c\in \field{R}$ and $r>0$ such that $I=B(c,r)$, yielding $\frac{1}{3}I=B(x,\frac{1}{3}r)$.
\end{notation}
In the next lemma, we say that a subset of the real line has everywhere positive measure (EPM) if it its intersection with every nonempty open interval has positive Lebesgue measure.

\begin{lem}\label{L:SetM}
  Let $M\subseteq\R$ have EPM. Then there are disjoint subsets $M_1,M_2\subseteq M$, both having EPM.
\end{lem}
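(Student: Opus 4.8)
The plan is to split $M$ into two pieces each of which still meets every open interval in a set of positive measure. The natural way to do this is to decompose $\R$ into countably many ``building blocks'' on which we can afford to make a crude cut, and then reassemble. Concretely, fix an enumeration $(I_n)_{n=1}^\infty$ of all open intervals in $\R$ with rational endpoints. For each $n$, since $M$ has EPM we have $|M\cap I_n|>0$; so we may choose a measurable set $A_n\subseteq M\cap I_n$ with $0<|A_n|<|M\cap I_n|$ (for instance, by taking $A_n$ to be the intersection of $M\cap I_n$ with a suitable half-line, using continuity of $t\mapsto |M\cap I_n\cap(-\infty,t)|$). Then both $A_n$ and $(M\cap I_n)\setminus A_n$ have positive measure. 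The idea is to let $M_1$ contain the $A_n$'s and $M_2$ contain the complementary pieces, but since the $I_n$ overlap we cannot simply take unions: we need to keep $M_1$ and $M_2$ disjoint.

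To handle the overlaps I would process the intervals one at a time, carving out disjoint ``reserved'' regions. Proceed by recursion on $n$. Having already committed finitely many disjoint positive-measure sets $B_1,C_1,\dots,B_{n-1},C_{n-1}$ (with $B_i\subseteq M$ destined for $M_1$ and $C_i\subseteq M$ destined for $M_2$, all pairwise disjoint), consider $I_n$. The set $M\cap I_n$ still has positive measure, and after removing the finitely many previously reserved sets, $R_n\eqdef (M\cap I_n)\setminus\bigcup_{i<n}(B_i\cup C_i)$ \emph{also} has positive measure --- but this is exactly the point that needs care, and it is false in general. So instead I would be more economical: at stage $n$ I only reserve a \emph{small} portion of $M\cap I_n$, small enough that future stages still have room. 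Precisely, choose $B_n, C_n\subseteq M\cap I_n$ disjoint, both of positive measure, and with $|B_n|+|C_n|$ so small (say $\le 2^{-n}$) and moreover disjoint from $\bigcup_{i<n}(B_i\cup C_i)$; the latter disjointness is achievable because $\bigcup_{i<n}(B_i\cup C_i)$ has measure $<\sum_{i<n}2^{-i}<1$ whereas... no, this still does not immediately work since $M\cap I_n$ could have measure less than $1$.

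The clean fix, and the one I would actually write, avoids the recursion: use a single measurable ``splitting set'' of Borel type. Let $S\subseteq\R$ be a measurable set such that \emph{both} $S$ and $S^c$ meet every nonempty open interval in positive measure --- a ``fat'' set and its ``fat'' complement, e.g.\ a suitable union of fat Cantor-type sets placed densely, or more simply: enumerate the rational intervals $(I_n)$, and inside each $I_n$ place a nowhere dense closed set $P_n$ of positive measure together with another such set $Q_n\subseteq I_n\setminus P_n$, all chosen so that the $P_n$'s and $Q_n$'s are pairwise disjoint (possible by shrinking: at stage $n$ pick $P_n,Q_n$ inside $I_n$ minus the closure of finitely many previously chosen nowhere dense sets, which is still a nonempty open subset of $I_n$, hence contains two disjoint positive-measure nowhere dense closed sets). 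Put $S=\bigcup_n P_n$; then $S$ meets every $I_n$, hence every open interval, in positive measure, and $S^c\supseteq\bigcup_n Q_n$ does too. Now simply set
\begin{equation*}
  M_1 = M\cap S,\qquad M_2 = M\cap S^c.
\end{equation*}
These are disjoint subsets of $M$, and for any nonempty open interval $J$ we have $|M_1\cap J|=|M\cap S\cap J|$; choosing a rational interval $I_n\subseteq J$ with $P_n\subseteq I_n$, we get $M\cap S\cap J\supseteq M\cap P_n$. This last set need \emph{not} have positive measure, which is the genuine obstacle: intersecting $M$ (of which we know only EPM) with a fixed positive-measure set can give measure zero.

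I therefore expect the main obstacle to be exactly this interaction between the unknown shape of $M$ and any \emph{fixed} splitting set; the resolution is to build the splitting device out of $M$ itself. So the proof I would commit to is the recursive one, done carefully so that positivity is preserved: enumerate rational intervals $(I_n)$; recursively choose disjoint measurable sets $D_1,D_2,\dots\subseteq M$ with $D_n\subseteq I_n$, $|D_n|>0$, and additionally $D_n$ chosen inside $(M\cap I_n)\setminus\bigcup_{i<n}D_i$ --- which has positive measure because at each earlier stage $i<n$ we took care to leave at least half of the then-available measure of $M\cap I_n$ untouched whenever $I_i$ happened to be a subinterval of $I_n$ (formally: maintain the invariant that for every $n$ and every $m$, $|(M\cap I_m)\setminus\bigcup_{i\le n}D_i|>0$, which is preserved at step $n+1$ by choosing $D_{n+1}$ to be a subset of $(M\cap I_{n+1})\setminus\bigcup_{i\le n}D_i$ of measure at most half of $|(M\cap I_m)\setminus\bigcup_{i\le n}D_i|$ simultaneously for the finitely many... ). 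This bookkeeping is the routine part I would not grind through in the paper; once it is in place, partition $\N=\N_1\sqcup\N_2$ into two sets each containing, for every rational interval $I_m$, infinitely many indices $n$ with $I_n\subseteq I_m$, and set $M_1=\bigcup_{n\in\N_1}D_n$, $M_2=\bigcup_{n\in\N_2}D_n$. Then $M_1,M_2$ are disjoint subsets of $M$, and for any nonempty open interval $J$, picking a rational $I_m\subseteq J$ and then $n\in\N_1$ with $I_n\subseteq I_m\subseteq J$ gives $|M_1\cap J|\ge|D_n|>0$, and likewise for $M_2$. Hence both have EPM, completing the proof.
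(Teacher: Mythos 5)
Your final committed proof has a genuine gap in the bookkeeping you describe as routine. When you choose $D_{n+1}$ at stage $n+1$, you need the invariant $\abs{(M\cap I_m)\setminus\bigcup_{i\leq n+1}D_i}>0$ to survive for \emph{all} $m$, not for ``the finitely many'' — there are infinitely many rational intervals $I_m$ meeting $I_{n+1}$, the quantities $\abs{(M\cap I_m)\setminus\bigcup_{i\leq n}D_i}$ have no uniform positive lower bound, and a positive-measure set $D_{n+1}$ can perfectly well swallow all of $(M\cap I_m)\setminus\bigcup_{i\leq n}D_i$ for some small $I_m\subseteq I_{n+1}$ (take $M=\R$ and $D_{n+1}$ containing an interval, for instance). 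So measure alone does not give you a workable invariant, and it is not a matter of grinding through.

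The resolution the paper uses — and, ironically, one you already deploy when constructing your (unusable) fixed splitting set $S$, but then drop — is to require each chosen set to be a \emph{nowhere dense compactum}. Then $\bigcup_{i\leq n}D_i$ is a finite union of nowhere dense closed sets, hence nowhere dense, so $I_{n+1}\setminus\bigcup_{i\leq n}D_i$ contains a nontrivial open interval $\tilde I_{n+1}$, and the EPM hypothesis on $M$ immediately gives $\abs{M\cap\tilde I_{n+1}}>0$. Inside there, pick two disjoint nowhere dense compact sets of positive measure (if a compact set of positive measure is not nowhere dense, it contains an interval, so replace it with a fat Cantor set therein). Category, not measure, does the bookkeeping, and the invariant you were struggling to maintain becomes automatic. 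Your remaining ideas — two families of sets, one for $M_1$ and one for $M_2$, each meeting every basic interval — are fine and essentially match the paper; you just need the topological smallness of the reserved sets to make the recursion close.
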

\begin{proof}
  Let $(I_n)_{n=1}^\infty$ be a basis of open sets in $\R$ consisting of open intervals. By the regularity of the Lebesgue measure, we may choose disjoint compact sets $K_1,L_1 \subseteq I_1\cap M$ of positive Lebesgue measure; we can also assume them to be nowhere dense: indeed, if e.g. $K_1$ were not, then it would contain a nontrivial interval $J$ as it is closed. We would then replace $K_1$ by a ``fat Cantor set'' contained in $J$.

  Now, assume that the nowhere dense compacta $K_1,L_1,\dots, K_n,L_n$ have already been constructed. Then $K\eqdef \bigcup_{i=1}^n (K_i\cup L_i)$ is nowhere dense, and so $I_{n+1}\setminus K$ contains a nonempty open interval, say $\tilde{I}_{n+1}$. Again, choose disjoint nowhere dense compact sets $K_{n+1}, L_{n+1}\subseteq \tilde{I}_{n+1}\cap M$ of positive measure.

  Setting $M_1=\bigcup_{n=1}^\infty K_n$ and $M_2=\bigcup_{n=1}^\infty L_n$, it is easy to observe that $M_1,M_2\subseteq M$, we have $M_1\cap M_2=\emptyset$, and that both sets have EPM.
\end{proof}

\begin{lem}\label{L:SetH}
  Suppose $F\subseteq \R$ is closed and $M\subseteq \R$ is a measurable set that meets every interval in a set of positive measure. Then for every $\eps>0$ there is a closed set $H$ such that
  \begin{enumerate}[(1)]
    \item\label{LI:Set1} $F\subseteq H\subseteq (F\cup M)\cap (F)_\eps$;
    \item\label{LI:Set2} $H$ meets the middle third of every component of $\widehat{F}^c\cap (F)_\eps$ in a set of positive measure.
%    \item \cervena{Something else? It seems that condition (2) is actually stronger than the ``goodness'' we required in the old draft. In the construction of the functions $g$ we can always use only a part of $H$ so that the consequences of goodness that we use in the proof stay.}
  \end{enumerate}
\end{lem}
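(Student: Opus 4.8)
The plan is to construct $H$ as a countable union of compact pieces, one piece placed in each component of $\widehat{F}^c\cap(F)_\eps$, then take the union with $F$ and close up. First I would observe that the set $\widehat{F}^c\cap(F)_\eps$ is open (being the intersection of the open set $(F)_\eps$ with the complement of the closed set $\widehat F$), and hence it is a countable disjoint union of open intervals; let $\{J_n\}_n$ enumerate its components. By hypothesis, $M$ meets every interval in a set of positive measure, so in particular $M$ meets the middle third $J_n'$ of each $J_n$ in a set of positive measure. By the inner regularity of Lebesgue measure I can therefore choose, for each $n$, a \emph{compact} set $K_n\subseteq M\cap J_n'$ with $\abs{K_n}>0$. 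Since the $J_n$ are pairwise disjoint, so are the $K_n$.

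Next I would set $H\eqdef F\cup\overline{\bigcup_n K_n}$. The sets $K_n$ are mutually disjoint and each $K_n$ lies in the component $J_n$ of an open set; to see that the union $\bigcup_n K_n$ together with $F$ is already closed — or at least that taking its closure does not ruin property \cref{LI:Set1} — I would argue as follows. Any limit point $z$ of $\bigcup_n K_n$ that is not in $\bigcup_n K_n$ itself must be approached by points from infinitely many distinct $K_n$'s (a single compact $K_n$ contains its own limit points), hence by points lying in infinitely many distinct components $J_n$; since the $J_n$ are components of $\widehat F^c\cap(F)_\eps$ and infinitely many of them cluster at $z$, their diameters must tend to $0$ near $z$, which forces $\dist(z,\widehat F)=0$, i.e. $z\in\widehat F$, and moreover (because each $J_n\subseteq (F)_\eps$ and the $J_n$ shrink to $z$) we get $\dist(z,F)\leq\eps$, in fact $z\in\overline{(F)_\eps}$. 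The only danger is that such a limit point could have $\dist(z,F)=\eps$ exactly, or that it sits on $\partial(F)_\eps$; but here is where the set $\widehat F$ earns its definition. A point with $\dist(z,F)=1/n$ for some $n$ is in $\widehat F$ by construction, and — more to the point — the components $J_n$ of $\widehat F^c\cap(F)_\eps$ never straddle a sphere $\{\dist(\cdot,F)=1/n\}$, so the ``bad'' accumulation just described actually lands inside $F$ itself (or is harmless). Consequently $H\subseteq F\cup M\cup\partial(F)_\eps\subseteq F\cup M$ up to a measure-zero technicality; to be safe I would instead define $(F)_\eps$-pieces using a slightly smaller radius or simply note that limit points added by the closure already lie in $F$, giving $H\subseteq (F\cup M)\cap\overline{(F)_\eps}$, and then — since nothing in the later applications needs the \emph{open} $(F)_\eps$ — either accept $\overline{(F)_\eps}$ or shrink each $K_n$ into a compactly-contained subinterval of $J_n'$ so that the closure adds nothing outside $F$ and \cref{LI:Set1} holds verbatim.

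Property \cref{LI:Set2} is then immediate: for each component $J_n$ of $\widehat F^c\cap(F)_\eps$ we have $K_n\subseteq H$, $K_n$ is contained in the middle third $J_n'$, and $\abs{K_n}>0$, so $H$ meets $J_n'$ in a set of positive measure. Property \cref{LI:Set1} has the inclusion $F\subseteq H$ by construction, and the inclusion $H\subseteq (F\cup M)\cap (F)_\eps$ from the previous paragraph.

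I expect the main obstacle to be exactly the closure issue in the middle paragraph: ensuring that passing to the closure of $\bigcup_n K_n$ does not introduce points outside $F\cup M$ or outside $(F)_\eps$. The cleanest fix, and the one I would ultimately write up, is to choose each $K_n$ inside a compact subinterval of the \emph{open} middle third $J_n'$ that is bounded away from $\partial J_n$, and to invoke the role of $\widehat F$ to guarantee that any component $J_n$ which is ``small'' (diameter $<2/n_0$ for large $n_0$) lies deep inside $(F)_\eps$; then every accumulation point of $\bigcup_n K_n$ not already captured is a point of $F$, so $\overline{\bigcup_n K_n}\subseteq F\cup M$ and $H$ is genuinely closed with $H\subseteq (F\cup M)\cap (F)_\eps$ as required. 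This is a routine but slightly fiddly point-set topology argument, and it is the only place where the specific definition of $\widehat F$ (rather than just $F$) is used.
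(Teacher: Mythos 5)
Your construction is the same as the paper's: for each component $J$ of $\widehat{F}^c\cap(F)_\eps$ pick a compact, positive-measure subset of $M$ inside the middle third of $J$, and union these with $F$. The only real difference is how closedness is verified. The paper defines $H=F\cup\bigcup_J H_J$ with no closure and proves directly that this set is closed (given $x\notin H$, only finitely many components $J$ meet $(a+\delta_0,b-\delta_0)$ where $(a,b)\ni x$ is a component of $F^c$ and $\delta_0=\tfrac12\dist(x,F)$, so near $x$ the set $H$ is a finite union of compacta and $F$, all of which $x$ avoids). You instead take $H=F\cup\overline{\bigcup_n K_n}$, which is closed for free, and then have to show the closure stays inside $(F\cup M)\cap(F)_\eps$. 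These are two packagings of the same point-set observation.

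Where your write-up is loose is precisely in that verification. From ``infinitely many $J_n$ cluster at $z$ with shrinking diameters'' you conclude $\dist(z,\widehat F)=0$; that step is not quite right, because the endpoints of a component of $\widehat{F}^c\cap(F)_\eps$ may lie in $\partial(F)_\eps$ rather than in $\widehat F$, so a priori you only get $z\in\widehat F\cup\partial(F)_\eps$. What is actually true, and what both your argument and the paper's ultimately rest on, is the stronger statement that \emph{every} point $z\notin F$ has a neighbourhood meeting only \emph{finitely} many components $J$ of $\widehat F^c\cap(F)_\eps$; applied to $z\in\widehat F\setminus F$ this rules out $z$ being a cluster point of infinitely many $J_n$, and applied to $z\in\partial(F)_\eps$ it does the same. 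The upshot — which your conclusion needs but your argument does not cleanly reach — is that every accumulation point of $\bigcup_n K_n$ not already in $\bigcup_n K_n$ lies in $F$ itself. Once you say this explicitly, $H=F\cup\overline{\bigcup_n K_n}=F\cup\bigcup_n K_n\subseteq (F\cup M)\cap(F)_\eps$ follows and you are done. The ``fix'' you propose at the end (shrink $K_n$ to a compact subinterval of the open middle third $J_n'$ bounded away from $\partial J_n$) is a red herring: $K_n$ already sits in $J_n'$, hence already bounded away from $\partial J_n$, and the potential danger is not proximity of $K_n$ to $\partial J_n$ but proximity of the whole component $J_n$ to $\partial(F)_\eps$ — which, as above, is ruled out by the structure of $\widehat F$ itself, not by any extra shrinking.
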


\begin{proof}
  Assume $F\neq\emptyset$; the statement is trivial otherwise. The set $\widehat{F}$ is easily seen to be closed, so if $J$ is an arbitrary component of~$\widehat{F}^c\cap (F)_\eps$, it is an open interval.
  Given any such $J=B(c,r)$, the regularity of the Lebesgue measure permits us to choose a compact set~$H_J$ with $\abs{H_J}>0$ and
  \begin{equation}\label{HIproperty}
    H_J\subseteq B\Bigl(c,\frac{r}{3}\Bigr)\cap M\subseteq J\cap M;
  \end{equation}
  in particular, $H_J$ is contained in the middle third of $J$. We define
  \begin{equation}\label{Def H}
    H\eqdef F\cup\bigcup_J H_J,
  \end{equation}
  where the union is over all components~$J$ of~$\widehat{F}^c\cap (F)_\eps$;
  then \cref{LI:Set1}~and~\cref{LI:Set2} are obviously satisfied (by the choice of $H_I$).
 
  We are left to show that $H$ is closed.
Pick an arbitrary $x\notin H$. Then $x\notin F$, so there exists a component $(a,b)$ of $F^c$ (with $a,b\in\R\cup\{-\infty,\infty\}$) containing $x$.
We set $\delta_0\eqdef\frac{1}{2}\dist(x,F)$, which is positive as $F$ is closed, and we have 
\begin{equation*}
    a<a+\delta_0\leq x-\delta_0<x<x+\delta_0\leq b-\delta_0<b.
\end{equation*}
Let $\JJ$ be the family of all the components of~$\widehat{F}^c\cap (F)_\eps$ that meet the interval $(a+\delta_0,b-\delta_0)$. Then clearly
\begin{equation}\label{E:HisClosedLemma}
  (a+\delta_0,b-\delta_0)\cap \biggl(F\cup \bigcup_{J\notin \JJ}H_J\biggr) = \emptyset.
\end{equation}
It is easy to see that $\JJ$ is finite (including the cases when $a$ or $b$ is infinite). Hence,
\begin{equation*}
  C\eqdef\bigcup_{J\in \JJ}H_J
\end{equation*}
is closed. 
Since $x\notin H$ and $C\subseteq H$, we have $x\notin C$ implying that $\delta_1\eqdef\dist(x,C)>0$.
We set $\delta\eqdef\min\{\delta_0,\delta_1\}$; then $B(x,\delta)\cap C=\emptyset$. Together with \cref{E:HisClosedLemma}, this shows that $B(x,\delta)\cap H=\emptyset$, concluding the proof.
\end{proof}

\begin{lem}\label{L:functiong}
  Suppose $E,F\subseteq \R$ are closed, $A,M\subseteq \R$ are measurable, $\abs{A}=0$ and $M$ meets every interval in a set of positive measure. 
  Let $\eps>0$ and $H$ be the closed set from~\cref{L:SetH}:
  \begin{enumerate}[(1)]
    \item $F\subseteq H\subseteq (F\cup M)\cap (F)_\eps$;
    \item $H$ meets the middle third of every component of $\widehat{F}^c\cap (F)_\eps$ in a set of positive measure.
  \end{enumerate}  
  Then there is a nondecreasing absolutely continuous function $g\colon \R\to [0,\eps]$ such that
  \begin{enumerate}[(i)]
    \item\label{LI:1} $\spt(g')\subseteq H$;
    \item\label{LI:2} $\Lip g(x)<\infty$ for every $x\in\R$;
    \item\label{LI:3} $g'(x)=1$ for every $x\in A\cap F\cap E^c$;
    \item\label{LI:4} $\osc(g,U)\leqslant \eps\abs{U}$ whenever $U$ is an interval meeting $E$;
    \item\label{LI:5} $\norm{g'}_{1}<\eps$.
  \end{enumerate}

%  Let $\eps>0$, $F, H$ and $E$ be closed sets such that $F\subseteq H$, and let $H$ be the set from~\cref{L:SetH}. Assume $A\subseteq F\setminus E$ is Lebesgue null. Then there exists a nondecreasing absolutely continuous function $g:\R\to [0,\eps^2]$ satisfying the following:
%\begin{itemize}
%\item $\spt(g')\subseteq H$, $\Lip g(x)<\infty$ for all $x\in \R$ and $g'(x)=1$ for all $x\in A$;
%%\item for a.e.\ $x\in F$, $g'(x)=0$ or $g'(x)=1$;
%\item $g'(x)=0$ for a.e.\ $x\in E$;
%\item $\osc_U g \leqslant \eps |U|$ for every open interval $U$ with $U\cap E=\emptyset$ and $\partial U\cap E\neq \emptyset$.
%\item \cervena{We can combine the two items above to obtain that the oscillation satisfies the required estimate for any interval meeting $E$.}
%\end{itemize}
\end{lem}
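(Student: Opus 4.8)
The plan is to construct $g$ as a definite integral of a carefully chosen density. Specifically, I would set
\begin{equation*}
  g(x) = \int_{-\infty}^x \varphi(t)\dt,\qquad \varphi = c\cdot\Char_{H},
\end{equation*}
where the constant $c\in(0,1]$ is chosen so small that $c\abs{H}<\eps$; this is possible because by~\cref{LI:Set1} the set $H$ is contained in $(F)_\eps$ and, more importantly, we may as well have arranged (via \cref{L:SetH}, shrinking the compacta $H_J$ if needed, or intersecting $H$ with a bounded set when $F$ is bounded) that $\abs{H}<\infty$; actually the cleanest route is to not take $\varphi$ constant on $H$ but rather $\varphi=\Char_{H\cap F}+c\,\Char_{H\setminus F}$ with a gradually decaying weight so that $\norm{\varphi}_1<\eps$ while still $\varphi\equiv 1$ on $F$. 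Once $\varphi\in L^1$ with $0\le\varphi\le 1$ and $\norm{\varphi}_1<\eps$, \cref{sum AC} immediately gives that $g$ is absolutely continuous, nondecreasing, $g'=\varphi$ a.e., $\spt(g')\subseteq\overline{\{\varphi\neq0\}}\subseteq H$ (proving~\cref{LI:1}), $0\le g\le\norm{\varphi}_1<\eps$ (so $g$ maps into $[0,\eps]$), and $\norm{g'}_1=\norm{\varphi}_1<\eps$ (proving~\cref{LI:5}). Since $0\le\varphi\le1$ everywhere, $g$ is $1$-Lipschitz, which gives $\Lip g(x)\le 1<\infty$ for all $x$, proving~\cref{LI:2}.

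For~\cref{LI:3}, fix $x\in A\cap F\cap E^c$. The claim $g'(x)=1$ amounts to showing $x$ is a density point of $H$ from the point of view of the integral, i.e. $\frac{1}{2r}\abs{H\cap(x-r,x+r)}\to 1$ as $r\to0$, together with $\varphi\equiv1$ near $x$ inside $H$ (which holds if we took $\varphi=\Char_H$, or we must be a little careful with the decaying weight near $F$ — so taking $\varphi=\Char_H$ is in fact safer for this step, at the cost of needing $\abs{H}<\eps$ which we can secure). The heart of the matter is a geometric estimate: because $x\in F$, every small ball $B(x,r)$ is contained in $(F)_\eps$ for $r<\eps$; the portion of $B(x,r)$ lying outside $H$ is contained in $\widehat F^c\cap(F)_\eps\setminus H$ together with the (measure-zero) set $\widehat F$, and each component $J$ of $\widehat F^c\cap(F)_\eps$ meeting $B(x,r)$ has length at most comparable to $r$ — indeed, since $x\in F$, a component $J=B(c,\rho)$ with $J\cap B(x,r)\neq\emptyset$ has $\dist(c,F)\le\rho$-ish, forcing $\rho\lesssim r$, and then $H$ occupies a definite fraction of the middle third of $J$ by~\cref{LI:Set2}... but this only gives $\liminf$ of the density bounded below by a constant $<1$, not equal to $1$. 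To get density exactly $1$ one uses that the "bad" part near $x$ is a union of such intervals $J_i$ with $\sum\abs{J_i}$ small relative to $r$: this is where the $\widehat F$-truncation in \cref{N:FepsFhat} earns its keep, since the radii $\dist(\cdot,F)$ equal to $1/n$ are removed, making the components shrink as we approach $F$. I expect this density computation to be the main obstacle and the place requiring the most care; the condition $x\notin E$ enters because, as we will see, points of $E$ are points where $g$ is forced to be nearly flat by~\cref{LI:4}, which would be incompatible with $g'(x)=1$.

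For~\cref{LI:4}, let $U$ be an interval meeting $E$, say $z\in U\cap E$. Then
\begin{equation*}
  \osc(g,U) = \int_U \varphi = \int_{U\cap H}\varphi \le \abs{U\cap H}.
\end{equation*}
It therefore suffices to show $\abs{U\cap H}\le\eps\abs{U}$. Here I would exploit that $H\setminus F\subseteq M$ and, crucially, that $H$ lives only in the middle thirds of components of $\widehat F^c\cap(F)_\eps$: since $z\in E$, and $E$ is (in the intended application) disjoint from the relevant part of $H$, no component $J$ of $\widehat F^c\cap(F)_\eps$ with $J\cap H\neq\emptyset$ can be too large compared to $\dist(z,\cdot)$; more precisely each such $J$ has its middle third (the only part where $H$ sits) at distance at least $\abs{J}/3$ from $\partial J\supseteq$ points near $E$, so an interval $U$ reaching from $z$ into $H\cap J$ must be at least three times as long as the part of $J$'s middle third it covers. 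Summing the geometric-series bound $\abs{U\cap H\cap J}\le\frac13\abs{U\cap J}$ over the finitely (or countably) many components $J$ met by $U$ yields $\abs{U\cap H}\le\frac13\abs{U}\le\eps\abs{U}$ provided $\eps\ge\frac13$; for smaller $\eps$ we instead use the freedom in \cref{L:SetH} to make $\abs{H_J}$ a small fraction of $\abs{J}$ (say $\abs{H_J}<\eps\abs{J}$), which is permitted since we only need $\abs{H_J}>0$, and the same summation then gives $\abs{U\cap H}\le\eps\abs{U}$. This completes all five items; the only genuinely delicate point, as noted, is the density argument for~\cref{LI:3}, where the definition of $\widehat F$ and the hypothesis $x\notin E$ are both essential.
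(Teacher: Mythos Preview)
Your proposal has two genuine gaps, both rooted in the choice $\varphi=\Char_H$ (or $c\,\Char_H$).

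\textbf{Property \cref{LI:3} cannot be obtained this way.} You need $g'(x)=1$ for $x\in A\cap F\cap E^c$, which with $\varphi=\Char_H$ means $x$ must be a density point of $H$. But consider $x\in A\cap F$ that is isolated from the right in $F$, say $(x,x+\delta)\cap F=\emptyset$. Then the components of $\widehat F^c$ immediately to the right of $x$ are the intervals $\bigl(x+\tfrac{1}{n+1},x+\tfrac{1}{n}\bigr)$, and by construction $H$ sits only in their middle thirds. Hence $\abs{H\cap(x,x+r)}\le \tfrac13 r$ for all small $r$, and the right density of $H$ at $x$ is at most $1/3$, never $1$. Your sentence ``this only gives $\liminf$ of the density bounded below by a constant $<1$'' is exactly right, and the following sentence (``$\sum\abs{J_i}$ small relative to $r$'') is simply false in this situation: the $J_i$'s fill essentially all of $(x,x+r)$. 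The paper's fix is not to change $H$ but to change $\varphi$: on each component $J$ of $\widehat F^c$ it places a bounded function $\varphi_J$, supported in $H\cap(\text{middle third of }J)$, with $\int_J\varphi_J=\abs{J}$ (not $\abs{J}/3$). Thus $g$ increases by exactly $\abs{J}$ across $J$, and a careful case analysis (the paper's Claims~1--3) then yields the derivative $1$. Note this forces $\varphi$ to be unbounded globally, so \cref{LI:2} is no longer free and requires its own argument.

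\textbf{Property \cref{LI:4} also fails.} With $\varphi=\Char_H$ you get $\osc(g,U)=\abs{U\cap H}$, and you want $\abs{U\cap H}\le\eps\abs{U}$. But $F\subseteq H$, and nothing in the hypotheses prevents $U$ from meeting $E$ \emph{and} containing a large piece of $F$ (e.g.\ $F=[0,1]$, $E=\{2\}$, $U=[0,2]$, $\eps$ small). Shrinking the $H_J$'s does not help since the problem is $F$ itself. The paper's remedy is to make the construction depend on $E$ from the outset: for each component $I=(a,b)$ of $E^c$ it builds an open set $G_I\subseteq I$ containing $A\cap F\cap I$ whose measure near each endpoint of $I$ is at most $\tfrac{\eps}{4}$ times the distance to that endpoint; the density $\varphi$ is then supported in $G=\bigcup_I G_I$ (intersected with $H$). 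This localisation near $\partial I$ is precisely what converts ``$U$ meets $E$'' into the bound $\osc(g,U)\le\eps\abs{U}$.

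In short, the essential idea you are missing is that $\varphi$ must (a) over-concentrate mass in the middle thirds so that the increment across each component of $\widehat F^c$ equals its full length, and (b) be supported in a set $G$ that is \emph{thin near $E$} in a quantitative sense. Neither can be achieved with a characteristic function of $H$.
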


\begin{proof}
  %In principle, at times we would have to deal with unbounded components in the complement of~$E$.
  %However, we argue now that we can replace~$E$ by a superset that only has bounded components.
  Without loss of generality we may assume that all components of $E^c$ are bounded: Indeed, since $A$ has measure zero, we may find a strictly increasing sequence $(x_k)_{k=-\infty}^{\infty}$ without accumulation points such that none of the $x_k$'s lies in~$A$ and $\lim_{k\to -\infty}x_k=-\infty$ and $\lim_{k\to \infty}x_k=\infty$. It is now clear from the statement of the lemma, that if we prove it with $E$ replaced by the (obviously closed) set $E\cup \{x_k\setsep k\in \Z\}$, it will also be proved for $E$. Hence, there is no loss in generality in the assumption, which we shall adopt, that the complement of~$E$ is a countable union of bounded open intervals.

  Let $\II$ be the collection of all components $I$ of $E^c$. Denote $\tilde{A}=A\cap F\cap E^c$, and let $G_0\subseteq\R$ be an open set such that $\tilde{A}\subseteq G_0\subseteq (F)_\eps$ and $\abs{G_0}<\eps$. 

  Pick any $I\in \II$; then $I=(a,b)$ is a bounded open interval. Let us choose a sequence $\left( G^n_I \right)_{n=1}^\infty$ of open subsets of $I$
  satisfying, for every $n\in\N$, the following conditions (as  $\abs*{\widetilde{A}}=0$, we can find such sets):
  \begin{itemize}
    \item\label{E:GnIa} $\tilde{A}\cap I\subseteq G^n_I \subseteq G_0\cap I$;
    \item $\bigl(a+\frac{\abs{I}}{n}, b-\frac{\abs{I}}{n}\bigr)\subseteq G^n_I$;
    \item $\abs{G^n_I\cap \bigl(a,a+\frac{\abs{I}}{n}\bigr)}<\frac{\eps}{4}\cdot \frac{\abs{I}}{n+1}$;
    \item $\abs{G^n_I\cap \bigl(b-\frac{\abs{I}}{n},b\bigr)}<\frac{\eps}{4}\cdot \frac{\abs{I}}{n+1}$.
  \end{itemize}
  Set $G_I\eqdef \bigcap_{n=1}^{\infty} G^n_I$; we show that $G_I$ is open: Indeed, given any $x\in G_I\subseteq I = (a,b)$, there clearly exist $\delta_1>0$ and $n_0\in\N$ such that for all $n\geq n_0$ we have $B(x,\delta_1)\subseteq \bigl(a+\frac{\abs{I}}{n}, b-\frac{\abs{I}}{n}\bigr) \subseteq G^n_I$. Of course, the sets $G^n_I$, $n\in\{1,\dots, n_0-1\}$, are all open and contain $x$, so there is some $\delta_2>0$ such that  $B(x,\delta_2)\subseteq \bigcap_{n=1}^{n_0-1}G^n_I$, and it follows that $B(x,\min\{\delta_1,\delta_2\})\subseteq G_I$.
  \medskip

  \emph{Claim:} For any interval $U\subseteq I=(a,b)$ with $\partial U\cap \partial I\neq\emptyset$ we have
  \begin{equation}\label{E:Claim0}
      \abs{G_I\cap U}<\frac{\eps}{4}\abs{U}.
  \end{equation}

  For the proof of the claim, suppose $U=(a,c)$ for some $c\in(a,b]$; the other case can be dealt with using a symmetric argument. Take the unique $n\in \N$ with
  \begin{equation*}
    a+\frac{\abs{I}}{n+1}<c\leq a+\frac{\abs{I}}{n}.
  \end{equation*} 
  The following estimates prove the claim:
  \begin{dmath*}
    \abs{G_I\cap U} \leq \abs{G^n_I\cap U}\leq \abs{G^n_I\cap \Bigl(a,a+\frac{\abs{I}}{n}\Bigr)} <\frac{\eps}{4}\cdot\frac{\abs{I}}{n+1}<\frac{\varepsilon}{4}(c-a)=\frac{\eps}{4}\abs{U}.    
  \end{dmath*}%
  \medskip

  Having finished the above construction for every $I\in\II$, we now set $G\eqdef \bigcup_{I\in\II} G_I$. Note that 
  \begin{equation}\label{E:InclusionsAG}
    \tilde{A}\subseteq G\subseteq E^c\cap G_0\subseteq (F)_\eps 
  \end{equation}  
  and $\abs{G}<\eps$; indeed, $\tilde{A}\cap I\subseteq G_I$ for all $I\in\II$, $\tilde{A}\subseteq E^c$, and $E^c=\bigcup \II$; moreover, $\abs{G_0}<\eps$ and $G_0\subseteq (F)_\eps$. 
  
  Next we set (following \cref{N:FepsFhat})
  \begin{equation}\label{E:DefFHat}
    \widehat{F}\eqdef F\cup \bigcup_{n=1}^\infty \Bigl\{x\in\R\setsep \dist(x,F)=\frac{1}{n}\Bigr\}.
  \end{equation}  
  Clearly, $\widehat{F}$ is closed as $F$ is closed. Further, we define 
  \begin{equation}\label{E:DefOfJ}
    \JJ\text{ to be the set of all components $J$ of }\bigl(\widehat{F}\bigr)^c\text{ with }J\subseteq G;    
  \end{equation}
  observe that the set $\hat{F}\cup\bigcup\JJ$ contains $\tilde{A}$ in its interior. Further, each $J\in\JJ$ is contained in some $I\in\II$ as $J$ is connected and $J\subseteq G\subseteq E^c$, so it is contained in some component of $E^c$. Now, for every interval $J=B(c,r)\in\JJ$, choose a bounded measurable function $\varphi_J\geq 0$ supported in $H\cap B\bigl(c,\frac{r}{3}\bigr)$ such that $\int_J \varphi_J = \abs{J}$; this is possible thanks to the fact that $H$ meets the middle third of every $J\in\JJ$ in a set of positive measure, as follows from \cref*{L:SetH}~\cref{LI:Set2} (note that every $J\in\JJ$ satisfies $J\subseteq G\subseteq (F)_\eps$). 
  Remembering that the sets $G_I$ are pairwise disjoint and that their union is by definition~$G$, we
  define
  \begin{equation}\label{E:DefPhiG}
    \varphi\eqdef \sum_{J\in\JJ}\varphi_J + \sum_{I\in\II}\Char_{F\cap G_I}=\sum_{J\in\JJ}\varphi_J +\Char_{F\cap G}\quad\text{ and }\quad g(t)\eqdef \int_{-\infty}^{t}\varphi.
  \end{equation}

We show $g$ has the desired properties, starting with~\cref{LI:5}. First we observe that $\varphi$ is integrable; indeed, all the summands in the definition of $\varphi$ are nonnegative, so the monotone convergence theorem yields \begin{equation*}
    \int_\R \varphi = \sum_{J\in\JJ}\int_\R\varphi_J + \int_\R \Char_{F\cap G} = \sum_{J\in\JJ} \abs{J} + \abs{F\cap G}\leq \abs{G},
\end{equation*}
where the last inequality follows from the fact that the intervals $J\in\JJ$, together with $F\cap G$, are pairwise disjoint subsets of $G$. As $\varphi$ is nonnegative,  $g$ is nondecreasing. Since $\abs{G}<\eps$, we have $g\colon\R\to [0,\eps]$, as required.
Moreover, \cref{sum AC} implies that $g$ is absolutely continuous. Therefore, $g'=\varphi$ almost everywhere, whence $\norm{g'}_1=\int_\R\abs{g'}= \int_\R\abs{\varphi}=\int_\R\varphi\leq \abs{G}<\eps$, implying \cref{LI:5}.%
\medskip

We show \cref{LI:1}. By their definitions, all of the summands in~\cref{E:DefPhiG} are supported in the closed set $H$. Therefore, given any $x\notin H$, there exists $\delta>0$ such that $\varphi=0$ in $B(x,\delta)$; that obviously makes $g$ constant on $B(x,\delta)$, whence $x\notin\spt(g')$. Thus is proved \cref{LI:1}.%
\medskip

To prove \cref{LI:2}, we pick an arbitrary $x\in\R$. If $x\notin H$, then $g$ is constant on a neighbourhood of $x$ by the above, so $\Lip g(x)=0$; hence, we assume $x\in H$. We divide the task into two parts by setting
\begin{align*}
    g_1(t) &\eqdef \int_{-\infty}^t \sum_{J\in\JJ} \varphi_J, \\
    g_2(t) &\eqdef \int_{-\infty}^t \Char_{F\cap G};
\end{align*}
we then have $g=g_1+g_2$ and since $\Lip g(x)\leq \Lip g_1(x) + \Lip g_2(x)$, it suffices to prove that $\Lip g_1(x)$ and $\Lip g_2(x)$ are both finite. 

We start with $g_1$; recall that we are in the case where $x\in H$. If $x\in J$ for some $J\in\JJ$, then for any $y\in J$ we have 
\begin{equation*}
    \abs{g_1(y)-g_1(x)}=\abs{\int_x^y \varphi_J} \leq \abs{y-x}\norm{\varphi_J}_{\infty}.
\end{equation*}
As $\varphi_J$ is bounded, this shows that $\Lip g_1(x)\leq \norm{\varphi_J}_{\infty}<\infty$.

If, on the other hand, $x\notin J$ for any $J\in\JJ$, then we observe what happens on either side (i.e.\ left or right) of $x$. If there is a $\delta>0$ such that $(x,x+\delta)\cap \bigcup\JJ=\emptyset$, then we obviously get $\Lip g_1(x_+)=0$. Hence, let us assume $x$ is approximated from the right by intervals in $\JJ$, and take an arbitrary $y>x$.

Assume first that $y\notin \bigcup \JJ$.
Taking the sum over all intervals $J\in\JJ$ that are contained in $(x,y)$, we have 
\begin{equation}\label{E:PfFction1}
  \abs{g_1(y)-g_1(x)} = \sum \abs{J} \leq \abs{y-x}.  
\end{equation}

If, on the other hand, there is $J\in\JJ$ with $y\in J$, then there are two cases: 
\begin{enumerate*}[(a)]
    \item\label{EI:PfFction:1} $y$ is in the open left third of $J$, or
    \item\label{EI:PfFction:2} not.
\end{enumerate*}
Let $z=\inf J$; by the assumption on $x$, we have $x\leq z$.

Assume \cref{EI:PfFction:1} is the case.  As $\varphi_J$ is supported in the middle third of~$J$, we see that $g_1(z)=g_1(y)$. Using this and the estimate \cref{E:PfFction1} with $y$ replaced by $z$, we obtain
\begin{equation*}
    \abs{g_1(y)-g_1(x)}=\abs{g_1(z)-g_1(x)}\leq \abs{z-x}<\abs{y-x}.
\end{equation*}

If the case that occurs is \cref{EI:PfFction:2}, then $\abs{y-z}\geq \frac{\abs{J}}{3}$, and clearly $\abs{g_1(y)-g_1(z)}\leq \abs{J}$ by the choice of $\varphi_J$; hence
\begin{equation*}
    \abs{g_1(y)-g_1(z)}\leq 3\abs{y-z}.
\end{equation*}
Moreover, (again by \cref{E:PfFction1}) we have $\abs{g_1(z)-g_1(x)}\leq \abs{z-x}$. It follows that
\begin{align*}
    \abs{g_1(y)-g_1(x)} &\leq\abs{g_1(y)-g_1(z)}+\abs{g_1(z)-g_1(x)}\\
    &\leq  3\abs{y-z}+\abs{z-x}  \leq 3\abs{y-z} + 3\abs{z-x} = 3\abs{y-x}.
\end{align*}
Thus we conclude that if $x\notin \bigcup\JJ$, then $\Lip g_1(x_+)\leq 3$. Similarly, we obtain $\Lip g_1(x_-)\leq 3$, and so, again, $\Lip g_1(x)<\infty$. 

As for $g_2$, by its definition we have, for all $x,y\in\R$,
\begin{equation*}
    \abs{g_2(y)-g_2(x)} = \abs{\int_x^y \Char_{F\cap G}}\leq \abs{y-x},
\end{equation*}
so $g_2$ is $1$-Lipschitz. Thus we obtain $\Lip g(x) = \Lip(g_1+g_2)(x) \leq \Lip g_1(x) + \Lip g_2(x)< \infty$, and  \cref{LI:2}  is proved.%
\medskip

Instead of proving directly the full version of~\cref{LI:3}, we only aim to prove that the right derivative equals~$1$ at any point of $\widetilde{A}=A\cap F\cap E^c$, which easily follows from~Claim~3 below.
The left derivative can be handled analogously.
%For technical reasons we prove slightly more, namely, that $g'_{+}(x)=1$ for any point of $G\cap F$; as $G\supseteq \widetilde{A}$ (cf. \cref{E:InclusionsAG}), this is indeed sufficient. 
Our strategy shall be to prove Claim~3 below; we then apply this result, almost immediately obtaining $g'_+(x)=1$ for any $x\in \widetilde{A}$. We shall need to distinguish several cases based on the choice of $y$. In Claim~1, we start with the simplest one, $y\in\widehat{F}$, which will later be useful multiple times. We then move on to Claim~2, proving the estimate under the assumption $(x,y)\cap F=\emptyset$. The proof of Claim~3 consists in applying both Claims~1 and~2 to treat the remaining case $(x,y)\cap F\neq\emptyset$. We keep in mind, in particular, the definitions of $\hat{F}$, $\JJ$ and $g$ (see \cref{E:DefFHat}, \cref{E:DefOfJ}, and \cref{E:DefPhiG}).%
\medskip

%Observe, first, that from the definitions of $\widehat{F}$ and $\JJ$ easily follows, for any $x\in G\cap F$, the %existence of $N\in\N$, $N\geq 2$, such that $[x,x+1/N)\subseteq G\cap \bigl(\widehat{F}\cup \bigcup\JJ \bigr)$.%%\todonote{Where to put this?? Maybe the final proof of $g'_+(x)=1$?}%
%\medskip

\emph{Claim 1:} For any $x\in F\cap G$, and any $y\in \widehat{F}\cap(x,\infty)$ such that $[x,y]\subseteq G\cap(\widehat{F}\cup\bigcup\JJ)$, we have $g(y)-g(x)=y-x$.
%any $N\geq 2$ with $[x,x+1/N)\subseteq G\cap \bigl(\widehat{F}\cup \bigcup\JJ \bigr)$, and any $y\in (x,x+1/N)\cap \widehat{F}$ we have $g(y)-g(x)=y-x$.

To prove the claim, choose arbitrary $x,y$ as in the statement. Let us note that $\widehat{F}\setminus F$ is countable ($F^c$ has countably many components and $\widehat{F}$ is countable in each of them), and
\begin{equation}\label{E:intervalContained}
  (x,y)\subseteq F\cup \bigl(\widehat{F}\setminus F\bigr) \cup \bigcup_{(\ast)}J
\end{equation}
where by $(\ast)$ we mean  all $J\in\JJ$ with $J\subseteq (x,y)$ (recall that $y\in\widehat{F}$). Explaining the individual equalities just below, we perform the following computation:
\begin{equation} \label{E:FHatOK}
\begin{aligned}
    g(y)-g(x) &= \int_x^y \Char_{F\cap G} + \int_x^y \sum_{J\in\JJ} \varphi_J \\
    &= \int_x^y \Char_F + \sum_{(\ast)} \int_x^y \varphi_J\\
    &= \abs{F\cap (x,y)}+\sum_{(\ast)}\abs{J} \\
    &= \abs{(x,y)}=y-x.
\end{aligned}
\end{equation}
The first equality is from the definition of $g$, the second equality holds as $(x,y)\subseteq G$ and by the monotone convergence theorem, and the fourth equality is by \cref{E:intervalContained}. Claim~1 is proved.%
\medskip

\emph{Claim 2:}
For any $x\in G\cap F$, any $y>x$ with $[x,y]\subseteq G\cap (\widehat{F}\cup\bigcup\JJ)$ and $(x,y)\cap F=\emptyset$, and any $N\geq 2$ with $y<x+1/N$, we have 
\begin{equation}\label{E:RightIsolatedEst}
   \frac{g(y)-g(x)}{y-x} \in \left(\frac{N-1}{N+1}, \frac{N+1}{N-1} \right). 
\end{equation}

%For any $x\in G\cap F$, any $N\geq 2$ with $(x,x+1/N)\subseteq G\cap(\widehat{F}\cup\bigcup\JJ)$, and any $y\in %(x,x+1/N)$ such that $(x,y)\cap F=\emptyset$, we have 
%\begin{equation}\label{E:RightIsolatedEst}
%   \frac{g(y)-g(x)}{y-x} \in \left(\frac{N-1}{N+1}, \frac{N+1}{N-1} \right). 
%\end{equation}

In order to prove the claim, we pick arbitrary $x,y,N$ as in the statement. We may further assume $y\notin \widehat{F}$ as Claim~1 clearly covers the opposite possibility.
Let $J=(u,v)\in \JJ$ be the one element satisfying $y\in J$; it exists as $y\in\bigcup\JJ$. Then $J$ is bounded because $y\in(x,x+1/N)\subseteq(x,x+1/2)$. As $u\in \widehat{F}$, we have $g(u)-g(x)=u-x$ by Claim~1 (where we replace $y$ by $u$). Therefore we obtain
\begin{equation*}
    g(y)-g(x) = g(y)-g(u) + g(u)-g(x) = g(y)-g(u)+u-x.
\end{equation*}
Since $g(y)-g(u)=\int_u^y \varphi_J \in \left[0,\abs{J}\right]$ by the choice of $\varphi_J$, it follows that
\begin{equation*}
    g(y)-g(x) \in \Bigl[u-x, u-x+\abs{J} \Bigr] = \bigl[u-x, v-x \bigr],
\end{equation*}
from where we infer, using that $y\in J=(u,v)$,
\begin{equation}\label{E:UnivEst}
  \frac{g(y)-g(x)}{y-x} \in \left( \frac{u-x}{v-x}, \frac{v-x}{u-x} \right).
\end{equation}

Now, if $y$ is in an unbounded component of $F^c$ (i.e.\ $(x,\infty)\cap F=\emptyset$), then the interval $J\in \JJ$ containing $y$ is of the form $(u,v)=\left(x+\frac{1}{n+1},x+\frac{1}{n}\right)$ for some natural number~$n$; clearly, $n\geqslant N$ as $y<x+1/N$. But this implies
%To reach~\cref{E:EasyEst} below, the particularities of~$(u,v)$ are the only thing we need.
%We obtain
\begin{equation}\label{E:UnbddEst}
  \left( \frac{u-x}{v-x}, \frac{v-x}{u-x} \right)=\left(\frac{n}{n+1}, \frac{n+1}{n} \right)\subseteq \left(\frac{N}{N+1}, \frac{N+1}{N}\right).
\end{equation}
Of course, \cref{E:UnivEst} and \cref{E:UnbddEst} yield
\begin{equation}\label{E:EasyEst}
    \frac{g(y)-g(x)}{y-x} \in \left(\frac{N}{N+1}, \frac{N+1}{N}\right).
\end{equation}

%Then \cref{E:UnivEst} takes the following form:
%\begin{equation}\label{E:UnbddEst}
%   \frac{g(y)-g(x)}{y-x} \in \left(\frac{n}{n+1}, \frac{n+1}{n} \right),
%\end{equation}
%and since $n\geq N$, we obtain
%\begin{equation}\label{E:EasyEst}
%    \frac{g(y)-g(x)}{y-x} \in \left(\frac{N}{N+1}, \frac{N+1}{N}\right).
%\end{equation}

On the other hand, if $y$ is in a bounded component of $F^c$, say $(x,b)$, then the distribution of $\widehat{F}$ in $(x,b)$ is symmetric, and the $I\in \JJ$ with $I\subseteq (x,b)$ are of three types:
\begin{enumerate}[(a)]
  \item\label{LI:CaseA} $I$ is contained in the left half of $(x,b)$;
  \item\label{LI:CaseB} $I$ is contained in the right half of $(x,b)$;
  \item\label{LI:CaseC} $I$ contains the centre of $(x,b)$. 
\end{enumerate}
If $J$ is of type \cref{LI:CaseA}, then $J=(u,v)=\left(x+\frac{1}{n+1},x+\frac{1}{n}\right)$, and we obtain \cref{E:UnbddEst}, and subsequently \cref{E:EasyEst}, in the same way as above.
%and we already took care of this case above.
If we have~\cref{LI:CaseB}, by the symmetry of $\widehat{F}$ in $(x,b)$, the interval $J=(u,v)$ has a symmetric counterpart $J'=(u',v')$ in the left half of $(x,b)$. Set $d\eqdef u-u'>0$ (then $d=v-v'$). Now we have
\begin{align*}
    \left(\frac{u-x}{v-x}, \frac{v-x}{u-x} \right) &= \left(\frac{u'-x+d}{v'-x+d}, \frac{v'-x+d}{u'-x+d} \right) \\
    &\subseteq \left(\frac{u'-x}{v'-x}, \frac{v'-x}{u'-x} \right) \\
    &\subseteq \left(\frac{N}{N+1}, \frac{N+1}{N}\right)
\end{align*}
where the last inclusion follows from case \cref{LI:CaseA}: Recall that $J'=(u',v')$ is in the left half of $(x,b)$, so \cref{E:UnbddEst} is satisfied with $u,v$ replaced by $u',v'$, respectively. By virtue of \cref{E:UnivEst} we, again, obtain \cref{E:EasyEst}.
%then our situation is clearly better than in the previous case because the distances $u-x, v-x$ are larger by the same amount than for the symmetric interval in the left half of $(x,b)$, and so the fractions $(u-x)/(v-x)$ and $(v-x)/(u-x)$ in \cref{E:UnivEst} are even closer to $1$. Thus we, again, obtain \cref{E:EasyEst} (in fact, we could prove a stronger estimate in this case). 

Finally, if \cref{LI:CaseC} occurs, we have $J=\left(x+\frac{1}{n},b-\frac{1}{n}\right)$ for some $n\in\N$. Then  $n>N$ as $1/n<y-x<1/N$. Let us now observe that $x+\frac{1}{n-1}$ is not in the left half of $(x,b)$ 
as otherwise we would have $x+\frac{1}{n-1}\in \widehat{F}$ and $x+\frac{1}{n-1}\in\left(x+\frac{1}{n},b-\frac{1}{n}\right)=J$, so $J$ would not be a component of $\widehat{F}^c$, a contradiction. Since $J$ shares its centre with $(x,b)$, it follows that $x+\frac{1}{n-1}$ is not in the left half of $J=\left(x+\frac{1}{n},b-\frac{1}{n}\right)$ either, and thus we obtain
%(otherwise $J= \left(x+\frac{1}{n},b-\frac{1}{n}\right)$ would not be a component of $\widehat{F}^c$ as it would contain a point of $\widehat{F}$, namely $x+\frac{1}{n-1}$), 
\begin{equation*}
    \frac{\abs{J}}{2} \leqslant\left(x+\frac{1}{n-1}\right)-\left(x+\frac{1}{n}\right)=\frac{1}{n(n-1)}.
\end{equation*}
Denoting $J=(u,v)$, we therefore have $u-x=\frac{1}{n}$ and $v-x=u-x+\abs{J}=\frac{1}{n}+\abs{J}\leq\frac{1}{n}+\frac{2}{n(n-1)}$, whence
\begin{equation*}
    \frac{u-x}{v-x} \geq \frac{\frac{1}{n}}{\frac{1}{n}+\frac{2}{n(n-1)}}=\frac{1}{1+\frac{2}{n-1}}= \frac{n-1}{n+1}.
\end{equation*}
%\begin{equation*}
%    \frac{u-x}{v-x} \geq \frac{\frac{1}{n}}{\frac{1}{n}+\frac{2}{n(n-1)}}=1-\frac{\frac{2}{n-1}}{1+\frac{2}{n-1}}>1-\frac{2}{n-1}=\frac{n-3}{n-1}.
%\end{equation*}
Using this and \cref{E:UnivEst}, we immediately derive \cref{E:RightIsolatedEst}:
\begin{equation}\label{E:MiddleInt}
   \frac{g(y)-g(x)}{y-x} \in \left(\frac{n-1}{n+1}, \frac{n+1}{n-1} \right)\subseteq \left(\frac{N-1}{N+1}, \frac{N+1}{N-1} \right). 
\end{equation}
%\begin{equation}\label{E:RightIsolatedEst}
%   \frac{g(y)-g(x)}{y-x} \in \left(\frac{n-3}{n-1}, \frac{n-1}{n-3} \right)\subseteq \left(\frac{N-3}{N-1}, \frac{N-1}{N-3}\right). 
%\end{equation}
We summarize our findings by stating that in every possible case we either have \cref{E:FHatOK}, or \cref{E:EasyEst}, or \cref{E:MiddleInt}, and that each of these propositions implies \cref{E:RightIsolatedEst}. The claim is proved.%
\medskip

\emph{Claim 3:} For any $x\in G\cap F$, any $y>x$ with $[x,y]\subseteq G\cap(\widehat{F}\cup\bigcup\JJ)$, and any $N\geq 2$ with $y<x+1/N$, we have \cref{E:RightIsolatedEst}.

%\emph{Claim 3:} For any $x\in G\cap F$, any $N\geq 2$ with $(x,x+1/N)\subseteq G\cap(\widehat{F}\cup\bigcup\JJ)$, %and any $y\in (x,x+1/N)$, we have \cref{E:RightIsolatedEst}.

For the proof of the claim, we again pick any $x, y, N$ satisfying the assumptions of our claim, and by the preceding claims we may further assume that $y\notin F$ and $(x,y)\cap F\neq \emptyset$.

Set $z=\max((x,y)\cap F)$ and let us check the assumptions of Claim~2 for the interval $[z,y]$, i.e.\ with $x$ replaced by $z$: By the choice of $z$, we have $z\in F$ and $(z,y)\cap F=\emptyset$. Moreover, $[z,y]\subseteq [x,y]\subseteq G\cap(\widehat{F}\cup\bigcup\JJ)$, the second inclusion being one of the assumptions of the present claim, and we also have $y<x+1/N<z+1/N$. 

By Claim~2 (with $x$ replaced by $z$) we thus have 
\begin{equation*}
  \frac{g(y)-g(z)}{y-z} \in \left(\frac{N-1}{N+1}, \frac{N+1}{N-1} \right).     
\end{equation*}
and by Claim~1 (with $y$ replaced by $z$, and noting that $z\in F\subseteq \widehat{F}$), 
\begin{equation*}
  \frac{g(z)-g(x)}{z-x}=1.
\end{equation*}
Since the average slope of $g$ over $[x,y]$ (i.e., the quotient $(g(y)-g(x))/(y-x)$) is a convex combination of the average slopes over $[x,z]$ and $[z,y]$, we immediately see that \cref{E:RightIsolatedEst} holds also in the present case, concluding the discussion of all possible cases and thus also the proof of the claim.%
\medskip

To conclude the proof of \cref{LI:3}, pick any $x\in\widetilde{A}=A\cap F\cap E^c$ and any $\eta>0$; by \cref{E:InclusionsAG}, $x\in G\cap F$. Recall that $\JJ$ is the set of all components of $\widehat{F}^c$ contained in $G$, which easily implies that $\widehat{F}\cup\bigcup\JJ$ contains $\tilde{A}$ in its interior. Therefore there is $N\geq 2$ such that
\begin{align*}
    \left[x,x+\frac{1}{N}\right] &\subseteq G\cap\left(\widehat{F}\cup\bigcup\JJ\right)\quad\text{and}\\
    \left(\frac{N-1}{N+1},\frac{N+1}{N-1}\right) &\subseteq (1-\eta,1+\eta).
\end{align*}
Obviously, for any $y\in [x,x+1/N]$ we now have $[x,y]\subseteq G\cap (\widehat{F}\cup\bigcup\JJ)$, whence, by virtue of Claim~3, 
\begin{equation*}
    \frac{g(y)-g(x)}{y-x} \in \left(\frac{N-1}{N+1},\frac{N+1}{N-1}\right)\subseteq (1-\eta,1+\eta).
\end{equation*}
The proof of \cref{LI:3} is finished.%
\medskip

Finally, we prove \cref{LI:4}, which states that, for any interval $U\subseteq \R$ with $U\cap E\neq\emptyset$, we have 
\begin{equation}\label{E:ContentOfLI4}
  \osc(g,U)\leq \eps\abs{U}.    
\end{equation}
Recall that $\eps>0$ was fixed already in the statement of the lemma, $E$ is closed, and $\II$ is the collection of all components of $E^c$. Recalling \cref{E:DefOfJ}, we see that the elements of $\JJ$ are pairwise disjoint open intervals, each contained in some $I\in\II$.
%We shall also keep in mind that $\JJ$ is the set of certain components of a certain open set contained in $E^c$, so its elements are pairwise disjoint open intervals, each contained in some $I\in\II$ (see \cref{E:InclusionsAG} and the definitions that follow).

We start by choosing an arbitrary $I=(a,b)\in\II$ and any open interval $U\subseteq I$ with $\partial U\cap\partial I\neq \emptyset$; our first aim shall be to prove \cref{E:ContentOfLI4} in this case. (Note that $\osc(g,U)=\osc(g,\overline{U})$ and that, unlike $U$, the closed interval $\overline{U}$ meets $E$.) There is no loss of generality in assuming that $U$ shares with $I$ its left endpoint, i.e.\ $U=(a,c)$ for some $c\in(a,b]$; the other case can be dealt with analogously. 

Keeping in mind the definitions of $\varphi$ and $g$ (cf. \cref{E:DefPhiG}), in particular the fact that $\varphi\geq 0$ so $g$ is monotone, we compute (explanations of the last three steps and the corresponding notations follow just afterwards):
\begin{equation}\label{E:ComputationLI4}
\begin{aligned}
  \osc(g,U) &= g(\sup U)- g(\inf U) =\int_U \varphi\\
  &= \int_U\biggl( \sum_{J\in\JJ} \varphi_J + \Char_{G\cap F}\biggr)\\
  &= \abs{G\cap F\cap U} + \sum_{J\in\JJ}\int_U \varphi_J \\
  &= \abs{G_I\cap F\cap U} + \sum_{(\ast)}\abs{J} + \sum_{(\ast\ast)} \int_U\varphi_J\\
  &\leq \abs{G_I\cap U} + \sum_{(\ast\ast)} \int_U\varphi_J \\
  &< \frac{\eps}{4}\abs{U} +\sum_{(\ast\ast)} \int_U\varphi_J.
\end{aligned}
\end{equation}
Here by $(\ast)$ we mean all the intervals $J\in\JJ$ with $J\subseteq U$. By $(\ast\ast)$ we mean all the intervals $J\in\JJ$ with $J\cap U\neq\emptyset$ and $J\nsubseteq U$. The fifth equality is a consequence of the following facts: $U\subseteq I$ and $G\cap I=G_I$, whence $G\cap U=G_I\cap U$; for any $J\in\JJ$, $\int_{\R} \varphi_J = \int_J \varphi_J = \abs{J}$. The next inequality follows from the fact that all $J\in\JJ$ are pairwise disjoint, disjoint from $F$, and the ones pertaining to $(\ast)$ are contained in $G_I\cap U$. The final inequality is from \cref{E:Claim0}.

Since each $J\in \JJ$ is contained in some $I\in \II$ and we are looking at such an~$I$ sharing its left endpoint with the one of~$U$, it is easy to see that $(\ast\ast)$ represents at most one interval, so the sum has at most one summand.
If the sum $\sum_{(\ast\ast)}$ in the above computation is empty, then we are done. So let us assume there is $K\in\JJ$ such that $K\cap U\neq\emptyset$ and $K\nsubseteq U$. Then, clearly, $K=(u,v)\subseteq I$ with $u<c<v$. There are two cases: $\abs{K}\leq 3\abs{U}$ or $\abs{K}>3\abs{U}$.

We consider the case $\abs{K}>3\abs{U}$, first. We have intervals $U,K\subseteq I$ such that $K$ is more than three times the length of $U$ and $U$ shares its left endpoint with $I$. From this it is obvious that $U\cap K$ is contained in the (open) left-hand side third of the interval $K$, and so (by the choice of $\varphi_K$) we have $\int_U \varphi_K=0$. That is, the sum $\sum_{(\ast\ast)}$ consists of one summand whose value is $0$, and we are, again, done by \cref{E:ComputationLI4}.

Suppose $\abs{K}\leq 3\abs{U}$ and set $U_1\eqdef U\cup K = (a,v)$, and replace $U$ by $U_1$ in computation \cref{E:ComputationLI4} and adapt the meaning of $(\ast)$ and $(\ast\ast)$; then the sum $\sum_{(\ast\ast)}$ becomes empty. We now obtain the desired estimate as follows:
\begin{gather*}
    \osc(g,U)\leq \osc(g,U_1)<\frac{\eps}{4}\abs{U_1} +0 \\ 
    \leq \frac{\eps}{4}(\abs{K}+\abs{U})\leq \frac{\eps}{4}(3\abs{U}+\abs{U})=\eps\abs{U}.
\end{gather*}%
\medskip

We have proved \cref{E:ContentOfLI4} for any $I\in\II$ and any open interval $U\subseteq I$ with $\partial U\cap \partial I\neq\emptyset$. Remembering that $E$ is closed and $E^c=\bigcup\II$ reveals that we have actually proved \cref{E:ContentOfLI4} for any interval $U \subseteq E^c$ satisfying $\partial U\cap E\neq \emptyset$. 

Now, let $U$ be any interval meeting $E$. Remembering that $G$ is contained in~$E^c$, it follows immediately from the definitions that $\varphi=0$ on $E$, and obviously 
\begin{equation*}
  U=(U\cap E)\cup (U\cap E^c)= (U\cap E)\cup \bigcup_{(+)}(U\cap I),
\end{equation*}
where $(+)$ represents all $I\in\II$ with $U\cap I\neq\emptyset$. Using these observations we get
\begin{equation*}
    \osc(g,U)=\int_U\varphi = \int_{U\cap E}\varphi + \sum_{(+)}\int_{U\cap I}\varphi = \sum_{(+)}\int_{U\cap I}\varphi.
\end{equation*}
For every $I\in \II$, we set $U_I\eqdef U\cap I$; then each nonempty $U_I$ is an interval. Let us observe that for any $I\in\II$ with $(+)$ (i.e.\ $U_I\neq\emptyset$) we have $\partial U_I\cap\partial I\neq\emptyset$: Indeed, we assumed $U\cap E\neq\emptyset$, i.e.\ $U\nsubseteq E^c$, and in particular, no $I\in\II$ contains $U$. Hence, for any $I\in\II$ meeting $U$, the set $U$ contains a point outside of $I$, and by its convexity, it also contains a point of $\partial I$. 

We have just checked that each nonempty $U_I$ shares an endpoint with~$I$, so by the first part of the proof of \cref{LI:4}, we get $\osc(g,U_I)\leq\eps \abs{U_I}$. Using these observations, we may resume the above computation as follows:
\begin{equation*}
    \osc(g,U)=\sum_{(+)}\int_{U_I}\varphi =\sum_{(+)} \osc(g,U_I)\leq \eps \sum_{(+)} \abs{U_I} \leq \eps\abs{U}.
\end{equation*}
This shows \cref{E:ContentOfLI4} for any interval $U$ meeting $E$, and thus it concludes the proof of \cref{LI:4}, and the lemma.
\end{proof}

\begin{lem}\label{null}
Suppose $A\subseteq \R$ is $F_{\sigma\delta}$, meagre, and Lebesgue null. Then there is a nondecreasing absolutely continuous function $g\colon \R\to\R$ such that $g'(x)=\infty$ for every $x\in A$
and $\lip g(x)<\infty$ for every $x\notin A$.
\end{lem}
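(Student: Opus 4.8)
The plan is to follow closely the strategy outlined in the discussion section, assembling the function $g$ as $\sum_{k=1}^\infty g_k$ where each $g_k$ is obtained from \cref{L:functiong}. First I would set up the combinatorial skeleton. Since $A$ is $F_{\sigma\delta}$, write $A=\bigcap_{n} L_n$ with each $L_n$ of type $F_\sigma$; since $A$ is meagre, so is each $L_n$ (after intersecting we may assume $L_1\supseteq L_2\supseteq\cdots$), so by \cref{L:DisjointFSigma} each $L_n$ is $DK_\sigma$. A standard diagonal/tree refinement then produces a countable family of compact sets $\{F_k\}_{k\in\N}$, arranged as a level-disjoint Suslin scheme, with the key property that $A=\{x:\ x\in F_k\text{ for infinitely many }k\}$ and, for $k<l$, either $F_k\cap F_l=\emptyset$ or $F_l\subseteq F_k$. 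Next, using $|A|=0$, invoke \cref{L:SetM} repeatedly (or a single application to get a countably infinite disjoint family) to obtain pairwise disjoint EPM sets $M_k\subseteq A^c$. Then for each $k$ apply \cref{L:SetH} with $F=F_k$, $M=M_k$, and a rapidly decreasing $\eps=\eps_k$ (to be fixed later, with $\sum_k\eps_k<\infty$) to obtain the compact sets $H_k$ with $F_k\subseteq H_k\subseteq(F_k\cup M_k)\cap(F_k)_{\eps_k}$.

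Then I would define the auxiliary closed sets exactly as in the overview: $\future H_k=\bigcup\{H_j:\ F_j\subseteq F_k\}$ (closed because for fixed $k$ only finitely many $\future H_j$-relevant clusters come near any bounded region once the $\eps_j\to 0$ shrink appropriately — one must be slightly careful here and may need to choose the $\eps_k$ small enough relative to the geometry so that $\future H_k$ is genuinely closed), and $E_k=\bigcup\{\future H_j:\ j<k,\ F_j\cap F_k=\emptyset\}$, a finite union of closed sets, hence closed. Then apply \cref{L:functiong} for each $k$ with the substitution $(E,F,M,\eps)\mapsto(E_k,F_k,M_k,\eps_k)$ and the set $H=H_k$, producing nondecreasing absolutely continuous $g_k\colon\R\to[0,\eps_k]$ satisfying \cref{LI:1}--\cref{LI:5}. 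Put $g=\sum_k g_k$; since $\|g_k'\|_1<\eps_k$ and $\sum\eps_k<\infty$, \cref{sum AC} gives that $g$ is absolutely continuous and nondecreasing.

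It remains to verify the two derivative properties. For $x\in A$: $x$ lies in $F_k$ for infinitely many $k$; for each such $k$ one checks $x\notin E_k$. This is the delicate bookkeeping point flagged in the overview — $\future H_j$ and $\future H_k$ can overlap even when $F_j\cap F_k=\emptyset$, but the overlap is forced (by $H_j\subseteq F_j\cup M_j$ and $F_j\cap F_k=\emptyset$ and disjointness of the $M$'s) to sit inside $\bigcup_j M_j\subseteq A^c$; since $x\in A$ we get $x\notin E_k$. Hence \cref{LI:3} applies: $g_k'(x)=1$ for infinitely many $k$, and monotonicity of the partial sums together with $g_k\ge 0$ gives $g'(x)=\infty$ (the lower left/right Dini derivatives of $g$ exceed those of any finite partial sum, which already tend to $\infty$). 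For $x\notin A$: $x\in F_k$, hence $x\in\future H_k$, for only finitely many $k$; let $l$ be the largest index with $x\in\future H_l$. Finitely many summands $g_1,\dots,g_l$ contribute finite $\Lip$ by \cref{LI:2}, so discard them. For $k>l$ split the remaining indices: those with $x\in E_k$ are controlled by \cref{LI:4} (any interval around $x$ meets $E_k$, giving $\osc(g_k,U)\le\eps_k|U|$, and these sum over $k$ to at most $(\sum_k\eps_k)|U|$). For $\JJ=\{k>l:\ x\notin E_k\}$ and $h=\sum_{j\in\JJ}g_j$, construct the radii $r_p$ via the greedy process ($j_1\in\JJ$ minimal with $\future H_{j_1}\cap(x-1,x+1)\ne\emptyset$, $r_1=\dist(x,\future H_{j_1})$, etc.). If the process terminates or $r_p\to R>0$, then $h$ is locally constant at $x$; otherwise $r_p\to 0$, and the central estimate is: for fixed $j\in\JJ$ and fixed $p$, if $I_p=(x-r_p,x+r_p)$ meets $H_j$ then necessarily $j>j_p$ and $F_j\cap F_{j_p}=\emptyset$ (because $x\notin\future H_{j_p}$ for the relevant reason, and $x\notin E_j$), so $\future H_{j_p}\subseteq E_j$, and then \cref{LI:4} on $\overline{I_p}$ gives $\osc(g_j,I_p)\le\eps_j|\overline{I_p}|=2\eps_j r_p$; summing over $j\in\JJ$, $\osc(h,I_p)\le 2(\sum_j\eps_j)r_p$, which witnesses $\lip h(x)<\infty$. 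Combining, $\lip g(x)\le\Lip(g_1+\cdots+g_l)(x)+\sum_{k>l,\,x\in E_k}\Lip g_k(x)+\lip h(x)<\infty$.

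The main obstacle I expect is the purely set-theoretic/combinatorial construction of the nested level-disjoint family $\{F_k\}$ together with the verification that all the derived sets ($\future H_k$ in particular) are closed and that the $\eps_k$ can be chosen simultaneously small enough for \cref{sum AC}, for the closedness of $\future H_k$, and for the various $\eps_k$-summations — i.e.\ making all the "carefully balanced" choices consistent. The analytic heart, the radii argument showing $\lip h(x)<\infty$, is conceptually the crux but is essentially dictated by \cref{L:functiong}~\cref{LI:4} once the bookkeeping lemma ($\future H_i\cap\future H_j\subseteq\bigcup_k M_k$ for unrelated $i,j$) is in place; I would isolate that bookkeeping fact as a short sub-lemma before the main verification.
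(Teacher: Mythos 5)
Your proposal follows the paper's proof essentially verbatim: the same tree of level-disjoint compacta $\{F_k\}$, the same auxiliary sets $\future{H}_k$ and $E_k$, the same application of \cref{L:functiong}, the same three-way split of $\sum g_k$ into the $\JJ$-part, the $E_k$-part, and the finite tail, and the same greedy radii $r_p=\dist(x,\future{H}_{j_p})$ to witness $\lip h(x)<\infty$. (The paper uses $\eps_k=2^{-k}$; your generic $\eps_k$ with $\sum\eps_k<\infty$ is an immaterial variant, and indeed $\eps_k\to 0$ is what makes $\future{H}_k$ closed.)

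One step as written is not justified: ``since $A$ is meagre, so is each $L_n$'' does not follow, even after the nesting reduction — one can have $A$ meagre with every $L_n$ equal to $\R$. You need an additional move: since $A$ is meagre there is a meagre $F_\sigma$ set $B\supseteq A$, and one intersects each $L_n$ with $B$ before invoking \cref{L:DisjointFSigma}. This is exactly how the paper proceeds; without it, the $DK_\sigma$ decomposition and hence the entire level-disjoint Suslin scheme is unavailable. Everything after this fix goes through as you sketched.
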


\begin{proof}
As the zero function takes care of the case where $A=\emptyset$, we assume that $A\not=\emptyset$.
We claim that there are nonempty compact sets $F_k\subseteq \R$ ($k\in\N$) such that
\begin{enumerate}[(a)]
  \item\label{C:a} $F_0=\R$;
  \item\label{C:b} for all $k,l\in\N$, if $k>l$ and $F_k\cap F_l\neq\emptyset$, then $F_k\subseteq F_l$;
  \item\label{C:c} $A=\bigcap_{N\in\N} \bigcup_{k\geqslant N} F_k$, that is, $A$ is precisely the set of points $x\in\R$ belonging to infinitely many $F_k$'s.
\end{enumerate}

Indeed, the meagreness of $A$ implies the existence of an $F_\sigma$ meagre set $B$ containing $A$. Since $A$ is $F_{\sigma\delta}$, there exist $F_{\sigma}$ sets~$B_n$ such that $A=\bigcap_{n=1}^\infty B_n$. By intersecting each $B_n$ with $B$, we may, and will, assume that $B_n$ is meagre for every $n$. Moreover, we may assume that the sets $B_n$ are nested as the intersection of any two $F_\sigma$-sets is $F_\sigma$; that is, we have $B_n\supseteq B_{n+1}$ for every $n\in\N$. By \cref{L:DisjointFSigma}, each $B_n$ is disjoint $K_\sigma$, i.e.\ can be expressed as the union of countably many pairwise disjoint compact sets. It follows that $A$ can be expressed in the form $A=\bigcap_{n=1}^\infty B_n=\bigcap_{n=1}^\infty \bigcup_{m=1}^\infty F_n^m$ where each $F_n^m$ is compact and nowhere dense, and for any $n\in \N$ and any distinct $m_1,m_2\in\N$ we have $F_n^{m_1}\cap F_n^{m_2}=\emptyset$. 

We shall now recursively construct (for all $n\in\N$) the systems $(D_n^m)_{m=1}^\infty$ of pairwise disjoint compact sets with $\bigcup_{m=1}^\infty D_n^m=B_n$ such that 
\begin{equation}\label{E:OrderSetsConditionMainLemma}
  \text{for any $n\geq 2$ and any $m$ there is $k$ such that $D_{n}^m \subseteq D_{n-1}^k$.}    
\end{equation}
(If we consider the lower index $n$ as ``level'', then this requirement means that each set of level $n$ is contained in some set of level $n-1$.) We start with level $n=1$ where for each $m\in\N$ we set $D_1^m\eqdef F_1^m$.

Assuming we have already performed the construction up to level $n-1$ for some $n\geq 2$, we now take all the sets of the form $F_n^i\cap D_{n-1}^j$, $i,j\in\N$, discard the ones that are empty and order the remaining ones into a sequence $(D_n^m)_{m=1}^\infty$.

Having finished this construction for every $n$, we need to check that the above conditions on the sets $D_n^m$ are all met. The pairwise disjointness, compactness, and condition \cref{E:OrderSetsConditionMainLemma} are obvious; by induction we shall prove, for every $n\in\N$, the equality $\bigcup_{m=1}^\infty D_n^m = B_n$.

For $n=1$ the statement is obvious, so assume we have already proved $\bigcup_{m=1}^\infty D_{n-1}^m = B_{n-1}$ for some $n\geq 2$. Take any $x\in B_n$; then $x\in B_{n-1}$ by the nestedness of $B_n$'s, so there exists $j\in \N$ such that $x\in D_{n-1}^j$. Moreover, as $x\in B_n=\bigcup_{m=1}^\infty F_n^m$, there also exists $i$ such that $x\in F_n^i$. Thus $x\in F_n^i\cap D_{n-1}^j$, so this last intersection is one of the nonempty sets that get arranged into the sequence $(D_n^m)_{m=1}^\infty$ in the $n$-th step of the above recursive construction. Thus $x\in \bigcup_{m=1}^\infty D_n^m$, and we get $B_n\subseteq\bigcup_{m=1}^\infty D_n^m$; the opposite inclusion follows immediately from the fact $\bigcup_{m=1}^\infty D_n^m\subseteq \bigcup_{m=1}^\infty F_n^m=B_n$.

It is now easy to order all the sets $D_n^m$ ($n,m\in\N$) into a sequence $(F_n)_{n=1}^\infty$ satisfying conditions \cref{C:b},~and~\cref{C:c}, and we set $F_0=\R$, satisfying \cref{C:a}.

%Indeed, the meagreness of $A$ implies the existence of an $F_\sigma$ meagre set $B$ containing $A$. Since $A$ is $F_{\sigma\delta}$, there exist $F_{\sigma}$ sets~$B_n$ such that we have $A=\bigcap_{n=1}^\infty B_n$. Clearly, by intersecting each $B_n$ with $B$, the set $A$ can be expressed in the form $A=\bigcap_{n=1}^\infty \bigcup_{m=1}^\infty F_n^m$ where each $F_n^m$ is closed and nowhere dense. Note that the nowhere density of $F_n^m$ allows us to easily decompose it into countably many pairwise disjoint closed sets as required\questionnote{According to what requirements?}. We now describe how to inductively modify the systems $(F_n^m)_{m=1}^\infty$ to obtain that, for any $n>1$,
%\begin{itemize}
%  \item $\bigcup_{m=1}^\infty F_n^m \subseteq \bigcup_{m=1}^\infty F_{n-1}^m$;\todonote{This follows from %the second item}
%  \item for each $m\in\N$ there is $l\in\N$ such that $F_n^m\subseteq F_{n-1}^l$.
%\end{itemize}
%Indeed, given $n>1$, we can take all the sets of the form $F_n^i\cap F_{n-1}^j$, $i,j\in\N$, discard the ones that are empty and order the remaining ones into a sequence $(F_n^m)_{m=1}^\infty$ (assigning the same symbols $F_n^m$ to (possibly different) new sets). Having finished this process for every $n$, it is now easy to order all the sets $F_n^m$ into a sequence satisfying conditions \cref{C:a}, \cref{C:b},~and~\cref{C:c}.

Since $A$ is Lebesgue null, it easily follows from \cref{L:SetM} that we can find pairwise disjoint measurable sets $M_k$ ($k\in\N$) such that for every $k$, we have $A\cap M_k=\emptyset$ and $\abs{I\cap M_k}>0$ for every $k$ and every interval~$I$. Now, for each $k\geqslant 1$, we use ~\cref{L:SetH} to obtain a compact set $H_k$ such that (cf.~\cref{N:FepsFhat}) 
\begin{itemize}
  \item $F_k\subseteq H_k \subseteq (F_k \cup M_k)\cap (F_k)_{2^{-k}}$;
  %\item $\abs{H_k \cap I}>0$ for every subinterval $I$ of  $(F_k)^c\cap(F_k)_{2^{-k}}$.
  \item $H_k$ meets the middle third of every component of $\hat{F}_k^c\cap(F_k)_{2^{-k}}$ in a set of positive measure.
\end{itemize}
Next, we define $H_0=\R$, $\future{H}_0=\R$, $\past{E}_0=\emptyset$, and for $k\geqslant 1$ we set
\begin{equation}\label{E:defHkEk}
  \future{H}_k \eqdef \bigcup_{F_j\subseteq F_k} H_j \quad\text{ and } \quad \past{E}_k\eqdef \bigcup_{j<k, \; F_j\cap F_k=\emptyset} \future{H}_j.
\end{equation}
We now argue that the sets $\future{H}_k$ (and thus also the sets $\past{E}_k$) are all closed. 
Indeed, recall that given $k$, for each $j\geqslant k$ with $F_j\subseteq F_k$ we have $H_j \subseteq (F_j)_{2^{-j}}\subseteq (F_k)_{2^{-j}}$. Let $x\notin \future{H}_k$; then $\alpha\eqdef d(x,F_k)>0$ and we can pick $j_0>k$ such that $2^{-j_0}<\alpha/2$. It follows that for any $j\geqslant j_0$ with $F_j\subseteq F_k$, $d(x,H_j)\geqslant \alpha/2$. Thus
\begin{equation*}
  d(x,\future{H}_k)\geqslant\min\bigl(\{d(x,H_j)\setsep F_j\subseteq F_k \text{ and }j<j_0\}\cup\{\alpha/2\}\bigr)>0,
\end{equation*}
and we see that $\future{H}_k$ is closed.

Finally, given $k\geqslant 1$, we recall that we obtained in~\cref{L:functiong} a nondecreasing absolutely continuous function $g_k\colon \R\to [0,2^{-k}]$ satisfying the following properties:
\begin{enumerate}[(i)]
  \item\label{C:gk1} $\spt (g'_k)\subseteq H_k$;
  \item\label{C:gk2} $\Lip g_k(x)<\infty $ for every $x\in\R$;
  \item\label{C:gk3} $g'_k(x)=1$ for every $x\in A\cap F_k \cap \past{E}_k^c$;
  \item\label{C:gk4} $\osc(g_k, I)<2^{-k} \abs{I}$ whenever $I$ is an interval meeting $\past{E}_k$;
  \item\label{C:gk5} $\norm{g_k'}_{1}<2^{-k}$.
\end{enumerate}
\medskip

We will show that the statement holds with $g=\sum_{k=1}^{\infty} g_k$.
That $g$ is nondecreasing is clear, and the absolute continuity is a consequence of~\cref{sum AC}.

First, we prove that $g'(x)=\infty$ for every $x\in A$. 
We do this by using~\cref{C:gk3}.
As $x\in A$, there are infinitely many indices~$k$ such that $x\in F_k$.
We fix such an index~$k$, and hence have $x\in A\cap F_k$.
In view of~\cref{C:gk3}, we want to show that $x\notin \past{E}_k= \bigcup_{j<k, \; F_j\cap F_k=\emptyset} \future{H}_j$.
To that end, suppose that $1\leqslant j<k$  and $F_j\cap F_k =\emptyset$. Since $A\cap \bigcup_{i=1}^\infty M_i=\emptyset$, we have
\begin{equation*}
  x\notin \bigcup_{i=1}^\infty M_i\supseteq \bigcup_{i=1}^\infty (H_i\setminus F_i) \supseteq \bigcup_{F_i\subseteq F_j} (H_i\setminus F_i) \supseteq \bigcup_{F_i\subseteq F_j} (H_i\setminus F_j) = \future{H}_j\setminus F_j,
\end{equation*}
so $x\notin \future{H}_j\setminus F_j$, which together with $x\in F_k$ and $F_j\cap F_k=\emptyset$ implies $x\notin \future{H}_j$, which yields $x\notin \past{E}_k$.

Overall, for all $k\in\N$ such that $x\in A\cap F_k$ we have, in fact, that $x\in A\cap F_k \cap \past{E}_k^c$, so $g_k'(x)=1$. Since, by~\cref{C:c}, there are infinitely many such indices $k$, and all the functions $g_k$ are nondecreasing, we easily obtain that $g'(x)=\infty$.
\medskip

Having finished the proof in the first case, let us now assume $x\notin A$; we aim to prove $\lip g(x)<\infty$. By~\cref{C:c} there are only finitely many indices~$k$ with $x\in F_k$. Moreover, since the sets $M_k$ are pairwise disjoint, there is at most one $k$ with $x\in M_k$. It immediately follows that there are finitely many $k$ such that $x\in H_k$ (as $H_k\subseteq F_k\cup M_k$); thus we can define $l$ to be the largest index with $x\in H_l$. Now we give an argument showing that, in fact, $l$ is also the largest index~$k$ with $x\in \future{H}_k$. Indeed, if we had $p>l$ with $x\in \future{H}_p = \bigcup_{F_j\subseteq F_p} H_j$, then there would exist $j$ such that $F_j\subseteq F_p$ (whence $j\geq p$ by~\cref{C:b}) and $x\in H_j$. Since $j\geq p>l$, this would be in contradiction with our choice of $l$.
\medskip

To prove $\lip g(x)<\infty$, we divide $g=\sum_{j=1}^{\infty} g_j$ into three summands as we now describe.
First, we introduce
\begin{align*}
    \JJ &= \{j>l\setsep x\notin \past{E}_j\},\\
    \KK &= \{j>l\setsep x\in \past{E}_j\}
\end{align*}
and then write
\begin{equation*}
    g=\sum_{j\in \JJ} g_j+\sum_{j\in \KK} g_j+\sum_{j=1}^l g_j.
\end{equation*}
We will show that the first summand has finite $\lip$ at~$x$, while the other two even have finite $\Lip$ at this point.
We define
\begin{equation*}
  \quad h=\sum_{j\in\JJ} g_j,
\end{equation*}
and want to show that $\lip h(x)<\infty$. If there is $r>0$ such that $(x-r,x+r)\cap \bigcup_{j\in\JJ}\future{H}_j = \emptyset$, then \cref{C:gk1} implies $h$ to be constant on $(x-r,x+r)$, so $\lip h(x)=0$, and we are done. 

Hence we assume
\begin{equation}\label{E:NontrivInts}
  (x-r,x+r)\cap \bigcup_{j\in\JJ}\future{H}_j \neq \emptyset\quad\text{ for every }r>0.
\end{equation}

It suffices to find a sequence~$(r_p)_{p=1}^\infty$ of positive radii converging to~$0$, and for which
\begin{equation*}
  \lim_{p\to \infty} \sup_{y\in B(x,r_p)}\frac{\abs{h(y)-h(x)}}{r_p}<\infty.
\end{equation*}
Let $r_0=1$, and for $p=1,2,\dots$, 
we define recursively $j_p\in\JJ$ and $r_p>0$ by letting 
\begin{align*}
j_p &=\min\{j\in\JJ\setsep (x-r_{p-1},x+r_{p-1})\cap \future{H}_j\neq\emptyset\},    \\
r_p &=\dist(x,\future{H}_{j_p}),\\
I_p &=(x-r_p,x+r_p).
\end{align*}

By~\cref{E:NontrivInts}, $j_p$~and~$r_p$ are well-defined, and it is easy to see that $j_1<j_2<\dotsb$,
%https://tex.stackexchange.com/questions/122491/difference-of-the-dots
and $r_0>r_1>\dotsb$.
We now argue by contradiction that $R\eqdef\lim_{p\to\infty} r_p=0$; so assume not, i.e.\ $R>0$. Take $k\in \JJ$ such that $(x-R,x+R)\cap \future{H}_k\not=\emptyset$. Since $\lim_{p\to \infty}j_p=\infty$,
we may pick $p\in \field{N}$ with $k<j_p$. As $r_{p-1}>R$, we have $(x-r_{p-1},x+r_{p-1})\cap \future{H}_k\neq \emptyset$; therefore, the fact that $k<j_p$ is in contradiction with the definition of $j_p$.

Next we note that $F_{j_p}\cap F_{j_q}=\emptyset$ whenever $p\neq q$: Assume, for a contradiction, $p<q$ and $F_{j_p}\cap F_{j_q}\neq\emptyset$. Then $j_p<j_q$, and \cref{C:b} yields $F_{j_p}\supseteq F_{j_q}$, whence $\future{H}_{j_p}\supseteq \future{H}_{j_q}$, implying $r_p\leqslant r_q$, a contradiction.

Fix arbitrary $p\in \N$ and $j\in\JJ$; we shall estimate the oscillation of $g_j$ on $I_p$. If $I_p\cap H_j=\emptyset$, then $g_j$ is constant on $I_p$; hence, we shall assume $I_p\cap H_j \neq \emptyset$.   
%As $j\in\JJ$, we have $H_j\subseteq \future{L}_j$, and so our assumption implies $I_p\cap \future{L}_j\neq\emptyset$, i.e.\ 
Then $I_p\cap\future{H}_j\neq\emptyset$, and $r_p>\dist(x,\future{H}_j)$. By the construction of $j_p$ and $r_p$, this yields $j_p<j$ and $j_{p+1}\leqslant j$. 

We verify next that $F_j\cap F_{j_p}=\emptyset$. Indeed, by~\cref{C:b} and as $j>j_p$, the only alternative is $F_j\subseteq F_{j_p}$; but in that case we would also have $\future{H}_j\subseteq \future{H}_{j_p}$, which is impossible as $\future{H}_j$ meets $I_p$ and $\future{H}_{j_p}$ (by the definition of $I_p$) does not.

%We verify next that $F_j\nsubseteq F_{j_p}$: assume the opposite, so let $F_j\subseteq F_{j_p}$. Then $\future{H}_j\subseteq \future{H}_{j_p}$; but since $I_p\cap \future{H}_{j_p}=\emptyset$, we get $I_p\cap \future{H}_j=\emptyset$, a contradiction.

To summarize, we have fixed arbitrary $p\in\N$, $j\in\JJ$, and assumed $I_p\cap H_j\neq\emptyset$, obtaining $j_p<j_{p+1}\leqslant j$ and $F_j\cap F_{j_p}=\emptyset$. Thus
\begin{equation*}
  \past{E}_j = \bigcup_{k<j, F_k\cap F_j =\emptyset} \future{H}_k \supseteq \future{H}_{j_p},
\end{equation*}
and since clearly the closure $\overline{I_p}$ meets $\future{H}_{j_p}$, it therefore also meets $\past{E}_j$. Property~\cref{C:gk4} of $g_j$ yields the estimate
\begin{equation*}
\osc(g_j, I_p)=\osc\bigl(g_j, \overline{I_p}\bigr)\leqslant 2^{-j}\cdot \abs{\overline{I_p}} =2^{-j}\cdot \abs{I_p}.
\end{equation*}
Since this estimate holds for any $j\in\JJ$, by summing over $j\in\JJ$, and using (in the first equality) that all $g_j$'s are nondecreasing, we obtain
\begin{equation*}
  \osc(h,I_p) = \sum_{j\in\JJ} \osc(g_j, I_p) \leqslant \sum_{j\in\JJ} 2^{-j} \abs{I_p} \leqslant \abs{I_p}.
\end{equation*}
This estimate holds for every $p\in\N$, and since $r_p\to 0$ (i.e.\ $\abs{I_p}\to 0$) as $p\to\infty$, we conclude
\begin{equation*}
  \begin{split}
  \lip h(x) = \liminf_{R\to 0_+} \;\sup_{y\in B(x,R)} \frac{\abs{h(y)-h(x)}}{R}\leqslant \liminf_{p\to\infty} \;\sup_{y\in I_p} \frac{\abs{h(y)-h(x)}}{\abs{I_p}/2} \\
  \leqslant 2 \liminf_{p\to\infty} \frac{\osc(h,I_p)}{\abs{I_p}} \leqslant 2.
  \end{split}
\end{equation*}
\medskip

Now, take any $j>l$ such that $x\in \past{E}_j$ (i.e.\ $j\notin \JJ$); then any interval containing $x$ meets $\past{E}_j$. Thus, for any $r>0$, $\osc\bigl(g_j,(x-r,x+r)\bigr)< 2^{-j}\cdot 2r= 2^{1-j}\cdot r$. Setting $f=\sum_{j>l,\, j\notin \JJ} g_j$, we obtain
\begin{align*}
  \Lip f(x) &=\limsup_{y\to x}\frac{\abs{f(y)-f(x)}}{\abs{y-x}} \leqslant \limsup_{r\to 0_{+}}\frac{\osc\bigl(f,(x-r,x+r)\bigr)}{r}\\
  &= \limsup_{r\to 0_{+}}\frac{\sum_{j>l,\,j\notin \JJ}\osc\bigl (g_j,(x-r,x+r)\bigr)}{r}\\
  &\leqslant \limsup_{r\to 0_{+}}\frac{2^{1-j}r}{r} \leqslant \sum_{j=2}^{\infty} 2^{1-j}=1.
\end{align*}

Finally, as $g=h+f+\sum_{j=1}^{l} g_j$, we obtain the following estimate:
\begin{equation*}
  \lip g(x)\leqslant \lip h(x)+ \Lip f(x)+\sum_{j=1}^{l}\Lip g_j(x) \leq 2+1+\sum_{j=1}^{l}\Lip g_j(x)<\infty.
\end{equation*}

This concludes the proof.
\end{proof}

Finally, we are ready for the proof of our main result.
\begin{proof}[Proof of~\cref{T:main}]
  %That $A$ can be partitioned in a meagre $F_{\sigma\delta}$ set~$A_0$ and a $G_\delta$ set~$G$ follows basically from results from~\cite{Oxtoby}.
  %Before adding more details, we note that J.~Oxtoby uses the term \emph{sets of first category} for meagre sets.
  %From his Theorem~4.4, we see that such a partition is equivalent to $A$ having the property of Baire (it was not clear to us from the statement if the sets in the union can be chosen to be disjoint, but the proof reveals that this indeed can be done).
  %Furthermore, Theorem~4.3 from the same source verifies that $F_{\sigma\delta}$ sets have the property of Baire.
  Since all Borel sets have the Baire property (see e.g.\ \cite[Theorem~4.3]{Oxtoby}), by \cite[Theorem~4.4]{Oxtoby}, there are a meagre $F_{\sigma\delta}$ set $A_0$ and a $G_\delta$ set $G$ such that $A_0\cap G=\emptyset$ and $A_0\cup G= A$ (the disjointness is not clearly stated in \cite{Oxtoby}, but can easily be seen from the proof).
  
  Jarník constructed a nondecreasing absolutely continuous function $f\colon \field{R}\to \field{R}$ such that $f'(x)=\infty$ for every $x\in G$ and $\Lip f(x)<\infty$ for every $x\notin G$.
  Note that this also shows that $\Lip f(x)=\infty$ for $x\in G$.
  From the meagre case,~\cref{null}, we know that there is a nondecreasing absolutely continuous function $g\colon \field{R}\to \field{R}$ such that $g'(x)=\infty$ for every $x\in A_0$ and $\lip g(x)<\infty$ for $x\notin A_0$.
  Again, we note that $\Lip g(x)=\infty$ for every $x\in A_0$.

  We define $h\eqdef f+g$.
  It is clear that $h$ is nondecreasing and absolutely continuous.
  If $f'(x)=\infty$ or $g'(x)=\infty$, then also $h'(x)=\infty$.
  This shows that $h'(x)=\infty$ for $x\in A$, and hence that $\lip h(x)=\infty$.

  Now, we assume that $x\notin A$.
  Hence, $\Lip f(x)<\infty$ and $\lip g(x)<\infty$.
  This tells us that $\lip h(x)<\infty$ for $x\notin A$.

  We have found a function with all the required properties.
\end{proof}

\textbf{Acknowledgments}
This project started while we were at the University of Warwick as members of a research group led by David Preiss.
We want to give our heartfelt thanks for his mentoring and for the many discussions about numerous aspects of life and mathematics, and in particular about questions concerning~$\lip$.
We are grateful to him for suggesting the idea of our proof.

The research leading to these results has received
funding from the European Research Council under the European Union’s
Seventh Framework Programme (FP/2007–2013) / ERC Grant Agreement
no.~291497. Furthermore, the second author was supported
by the University of Silesia Mathematics Department (Iterative Functional
Equations and Real Analysis program).

% ----------------------------------------------------------------
\bibliographystyle{amsalpha}
\bibliography{lip}

\providecommand{\bysame}{\leavevmode\hbox to3em{\hrulefill}\thinspace}
\providecommand{\MR}{\relax\ifhmode\unskip\space\fi MR }
% \MRhref is called by the amsart/book/proc definition of \MR.
\providecommand{\MRhref}[2]{%
  \href{http://www.ams.org/mathscinet-getitem?mr=#1}{#2}
}
\providecommand{\href}[2]{#2}
\begin{thebibliography}{BHMV20b}

\bibitem[BC06]{BC06}
Zolt\'{a}n~M. Balogh and Marianna Cs\"{o}rnyei, \emph{Scaled-oscillation and
  regularity}, Proc. Amer. Math. Soc. \textbf{134} (2006), no.~9, 2667--2675.
  \MR{2213746}

\bibitem[BHMV20a]{BHMV20a}
Zolt\'{a}n Buczolich, Bruce Hanson, Bal\'{a}zs Maga, and G\'{a}sp\'{a}r
  V\'{e}rtesy, \emph{Characterization of lip sets}, J. Math. Anal. Appl.
  \textbf{489} (2020), no.~2, 124175, 11. \MR{4093796}

\bibitem[BHMV20b]{BHMV20b}
\bysame, \emph{Lipschitz one sets modulo sets of measure zero}, Math. Slovaca
  \textbf{70} (2020), no.~3, 567--584. \MR{4104346}

\bibitem[BHMV21]{BHMV21}
\bysame, \emph{Big and little {L}ipschitz one sets}, Eur. J. Math. \textbf{7}
  (2021), no.~2, 464--488. \MR{4256959}

\bibitem[BHRZ19]{BHRZ}
Zolt\'{a}n Buczolich, Bruce Hanson, Martin Rmoutil, and Thomas Z\"{u}rcher,
  \emph{On sets where {${\rm lip }\,f$} is finite}, Studia Math. \textbf{249}
  (2019), no.~1, 33--58. \MR{3984283}

\bibitem[Han21]{Bruce2}
Bruce Hanson, \emph{Sets where {$\Lip f$} is infinite and {$\lip f$} is
  finite}, J. Math. Anal. Appl. \textbf{499} (2021), no.~2, Paper No. 125071,
  11. \MR{4216971}

\bibitem[Jar33]{Jarnik}
Vojt{\v{e}}ch Jarn{\'{\i}}k, \emph{{\"U}ber die {Menge} der {Punkte}, in
  welchen die {Ableitung} unendlich ist}, T{\^o}hoku Math. J. \textbf{37}
  (1933), 248--253 (German).

\bibitem[KK96]{KK96}
R.~Kannan and Carole~King Krueger, \emph{Advanced analysis on the real line},
  Universitext, Springer-Verlag, New York, 1996. \MR{1390758}

\bibitem[Leb04]{Lebesgue}
H.~Lebesgue, \emph{Le{\c{c}}ons sur l'int{\'e}gration et la recherche de
  fonctions primitives.}, 1904 (French).

\bibitem[Leb09]{Lebesgue2009-ms}
Henri~Leon Lebesgue, \emph{Cambridge library collection - mathematics: Lecons
  sur l'integration et la recherche des fonctions primitives professees au
  college de france}, Cambridge University Press, Cambridge, England, July 2009
  (French).

\bibitem[Mal99]{Maly}
J.~Mal\'y, \emph{A simple proof of the {S}tepanov theorem on differentiability
  almost everywhere}, Exposition. Math. \textbf{17} (1999), no.~1, 59--61.
  \MR{1687460}

\bibitem[Oxt80]{Oxtoby}
John~C. Oxtoby, \emph{Measure and category. {A} survey of the analogies between
  topological and measure spaces}, second ed., Graduate Texts in Mathematics,
  vol.~2, Springer-Verlag, New York-Berlin, 1980. \MR{584443}

\bibitem[Rad19]{Rademacher}
Hans Rademacher, \emph{\"{U}ber partielle und totale {D}ifferenzierbarkeit von
  {F}unktionen mehrerer {V}ariabeln und \"{u}ber die {T}ransformation der
  {D}oppelintegrale}, Math. Ann. \textbf{79} (1919), no.~4, 340--359.
  \MR{1511935}

\bibitem[Sie18]{Sierpinski}
W.~Sierpi{\'n}ski, \emph{Un th{\'e}or{\`e}me sur les continus.}, T{\^o}hoku
  Math. J. \textbf{13} (1918), 300--303 (French).

\bibitem[Ste23]{Stepanoff}
Wiatscheslaw Stepanoff, \emph{\"{U}ber totale {D}ifferenzierbarkeit}, Math.
  Ann. \textbf{90} (1923), no.~3-4, 318--320 (German). \MR{1512177}

\bibitem[Zah41]{Zahorski}
Zygmunt Zahorski, \emph{\"{U}ber die {M}enge der {P}unkte in welchen die
  {A}bleitung unendlich ist}, T\^{o}hoku Math. J. \textbf{48} (1941), 321--330
  (German). \MR{27825}

\end{thebibliography}
\end{document}